\newcommand{\CC}{\mathbb{C}}
\newcommand{\FF}{\mathbb{F}}
\newcommand{\del}{\partial}
\newcommand{\Ann}{\mathrm{Ann}}
\newcommand{\mat}[4]{\left(\begin{array}{cc} #1 & #2 \\ #3 & #4 \end{array}\right)}
\newtheorem{theorem}{Theorem}
\newtheorem{prop}{Proposition}
\newtheorem{definition}{Definition}
\newtheorem{lemma}{Lemma}
\newtheorem{example}{Example}
\newsavebox{\BOXcochontriste}
\newcommand{\cochontriste}{\usebox{\BOXcochontriste}}
\newsavebox{\BOXcochonquirit}
\newcommand{\cochonquirit}{\usebox{\BOXcochonquirit}}
\newenvironment{proof}[0]
  {\cochontriste}
  {\ifvmode\leavevmode\fi\nolinebreak\cochonquirit\newline
}
\begin{document}
\title{Symbolic Integration in Prime Characteristic}
\author{Bill Allombert}
\date{}

\maketitle

\begin{abstract}
In this paper we study elementary extensions of
differential fields in prime characteristic.
In particular, we show that, in contrast to Liouville's result in
characteristic zero, all elements of an elementary extension
admit an antiderivative in some logarithmic extension.
\end{abstract}

\section{Introduction}

\subsection{Motivation}

In a famous paper \cite{L}, Liouville proved that some elementary functions of
a real or complex variable do not admit elementary antiderivatives, the usual
example being $x\mapsto\exp(x^2)$.
In 1968, Risch gave an algorithm (\cite{Ri}) to decide if an elementary function
of a real or complex variable admits an elementary antiderivative.

The modern definition of elementary functions \cite{Ro} as elements of an elementary
differential extension of $\CC(X)$ can be generalised to any base field. In
this paper, we study the problem when the base field has finite characteristic.

In this paper we study differential fields in prime characteristic from the
point of view of symbolic integration. Below is an example of the kind
of phenomenon which interests us.

Let $K=\FF_p(X,E)$ and $\del$ the derivation defined by $\del(X)=1$ and
$\del(E)=2*X*E$.
The element $E$ satisfies the same first-order linear differential equation
than the complex function $f:x\mapsto\exp(x^2)$, namely $f'=2*x*f$, thus we can
see $E$ as a $\FF_p$-analogue of $f$.
However if we pick $p=3$ and set $y=\frac{1-X^2}{X^3}*E$,
or pick $p=5$ and set $y=\frac{X^4-2*X^2+2}{2*X^5}*E$,
a straightforward computation shows that in both case $\del(y)=E$,
while $f$ is known not to have an antiderivative.

The object of this paper is to study whether such formulae always exist
and how to compute them.

We will make use of iterated derivations in the naive sense, and not use Hasse
derivatives, even though we are in finite characteristic.  However the more
striking results we obtain can be stated without reference to iterated
derivations, but they will be play a crucial role in the proofs.

\section{Definitions}

Let $K$ be a field. A \emph{derivation} on $K$ is a group homomorphism
$\del=\del_K$ from $K$ to $K$ such that for all $a$, $b$ in $K$ the
following holds:
$$\del(a*b) = \del(a)*b+a*\del(b)\enspace.$$

The kernel of $\del_K$ is a field called the field of constants of
$(K,\del_K)$, and is denoted by $C(K)$.

A pair $(K,\del_K)$ is a \emph{differential field} if $K$ is a field and
$\del_K$ is a derivation on $K$.

A \emph{differential extension} of $(K,\del_K)$ is a differential field
$(L,\del_L)$ such that $L/K$ is a field extension and $\del_L|_K=\del_K$.

Let $p$ be a prime number and $(K,\del_K)$ be a differential field of
characteristic $p$. Derivations in characteristic $p$ have two specific
properties:

\begin{itemize}
\item for all $a\in K$, $\del(a^p)=0$.
\item for all positive integers $e$, $\del^{p^e}$ is a derivation.
\end{itemize}

The simplest non-trivial example of a differential field in characteristic $p$
is $(\FF_p(X),\del)$ where $\del(F)=F'$.

Furthermore the $p$-th derivative of an element of $\FF_p[X]$ is zero.
As a consequence, $X^{p-1}$ does not admit a polynomial antiderivative:
indeed, by Wilson's formula we have $\del^{p-1}(X^{p-1})=-1$,
so if $u$ was a polynomial antiderivative of $X^{p-1}$ we would have
$\del^p(u)=-1$, a contradiction.

A second issue is that inseparable algebraic extensions either cannot be
provided with a compatible derivation, or admit an infinite number of
compatible derivations.
Consider the fields $K=\FF_p(X)$ and $L=\FF_p(X^{1/p})$
(for the usual derivation).

The identity $\del_L(X)=\del({\left(X^{1/p}\right)}^p)=0$ is incompatible with
$\del_K(X)=1$. It follows that $L/K$ is an algebraic extension of fields
which cannot be extended to an extension of differential fields.

A third issue is that if $L/K$ is a transcendental extension of differential
fields, then $C(L)$ will be a transcendental extension of $C(K)$.

Thus, by contrast to characteristic $0$, it is not possible to require
that $C(L)=C(K)$. Instead, we could require that $C(L)=L^p*C(K)$, but
we will not need it.

\subsection{Elementary extensions}

\begin{definition}
A differential extension $(L,\del_L)$ of $(K,\del_K)$ is logarithmic
of type ``$log(u)$'' if there exists $u\in K^{\*}$ and $y\in L$ generating $L$
over $K$ such that $\del(u)=\del(y)*u$.
\end{definition}

\begin{definition}
A differential extension $(L,\del_L)$  of $(K,\del_K)$ is exponential
of type ``$exp(u)$'' if there exists $u\in K^{\*}$ and $y\in L$ generating
$L$ over $K$ such that $\del(y)=\del(u)*y$.
\end{definition}

\begin{definition}
A differential extension $(L,\del_L)$ of $(K,\del_K)$ is elementary if there
exists a tower of differential extensions
$L_0=K\subseteq L_1 \ldots\subseteq L_n = L$ such that
each extension $L_{i+1}/L_i$ is either algebraic separable, logarithmic,
or exponential.
\end{definition}

\subsection{Linear differential operators}

This sections lists some basic results about linear differential operators
in characteristic $p$ that will be useful.

Let $(K,\del_K)$ be a differential field. The map $\del_K$ is an endomorphism
of the $C(K)$-vector space $K$. If $P$ is a polynomial with coefficients in
$C(K)$, we denote by $P(\del_K)$ the application of $P$ to $\del_K$ in
the algebra of $C(K)$-endomorphisms of $K$.

Let $y$ be an element of $K$. The annihilator of $y$, $\Ann(y)=\{P\in C(K)[X]|
P(\del)(y)=0\}$, is an ideal of the polynomial ring $C(K)[X]$.

The element $y\in K$ satisfies a linear differential equation with constant
coefficients if and only if $\Ann(y)\neq\{0\}$.

\begin{definition}[$p$-polynomials]
(See \cite[page 234]{J}.)
A polynomial $P$ over a field $K$ is a $p$-polynomial if it can be written as
$P=\sum_{j=0}^n a_j*X^{p^j}$ with $a_j\in K$ for $0\leq j\leq n$.
\end{definition}

\begin{lemma}
If $K$ is a field and $I$ a non-zero ideal of $K[X]$, then $I$ contains a
non zero $p$-polynomial.
\end{lemma}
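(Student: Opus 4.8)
The plan is to exploit the fact that, for a nonzero ideal $I$ of $K[X]$, the quotient ring $K[X]/I$ is a finite-dimensional $K$-vector space, and then to pit the infinitely many powers $X^{p^j}$ against this dimension by a pigeonhole argument.

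First I would dispose of the trivial case: if $I=K[X]$, then $X=X^{p^0}$ already lies in $I$ and is itself a nonzero $p$-polynomial. So I may assume $I$ is proper and choose a nonzero $f\in I$ of degree $d\geq 1$. Since $(f)\subseteq I$, the quotient $A=K[X]/I$ is a quotient of $K[X]/(f)$, hence a $K$-vector space of dimension at most $d$; in particular it is finite-dimensional.

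Next I would consider the $d+1$ residues $\overline{X^{p^0}},\overline{X^{p^1}},\dots,\overline{X^{p^d}}$ in $A$. As these are $d+1$ vectors in a space of dimension at most $d$, they are $K$-linearly dependent, so there exist $a_0,\dots,a_d\in K$, not all zero, with $\sum_{j=0}^{d} a_j\,\overline{X^{p^j}}=0$ in $A$. Lifting back, this says precisely that $P=\sum_{j=0}^{d}a_j X^{p^j}\in I$. Because the exponents $p^0<p^1<\cdots<p^d$ are pairwise distinct and the $a_j$ are not all zero, $P$ is a nonzero polynomial, and by construction it has exactly the shape required in the definition of a $p$-polynomial.

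There is no serious obstacle here; the one thing to get right is the bookkeeping of dimensions. The essential observation is that the set of admissible exponents $\{p^j : j\geq 0\}$ is infinite while the quotient $A$ is finite-dimensional, so a linear dependence among the corresponding residues is forced as soon as we take more of them than $\dim_K A$. Note that characteristic $p$ plays no role in this particular argument beyond giving meaning to the exponents $p^j$: the content is purely the pigeonhole principle applied inside a finite-dimensional quotient.
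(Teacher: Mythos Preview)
Your proof is correct and follows essentially the same approach as the paper's: both exploit that $K[X]/I$ is a finite-dimensional $K$-vector space and obtain a linear dependence among the images of the powers $X^{p^j}$. Your version is slightly more explicit in handling the degenerate case $I=K[X]$ and in bounding the dimension by choosing a specific $f\in I$, but the underlying idea is identical.
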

\begin{proof}
Since $I$ is non-zero, the quotient $K[X]/I$ is a finite dimensional
$K$-vector space.
If $\pi$ denotes the projection from $K[X]$ to $K[X]/I$, it follows that
the infinite family $(\pi(X^{p^n}))_{n\geq 0}$ is $K$-linearly dependent. Thus
there exist elements $(a_j)_{j=1}^k$ of $K$, not all zero, such that
$\sum_{j=1}^k a_j*X^{p^j}\in I$.
\end{proof}

\begin{lemma}\label{peqn}
Let $(K,\del_K)$ be a differential field, and $(y_i)_{i=1}^n$ be a family of
elements of $K$. If the $y_i$ satisfy a linear differential equation with
constant coefficients for all $1\leq i\leq n$, then there exists a non-zero
$p$-polynomial $P$ in $C(K)[X]$ such that $P(\del)(y_i)=0$ for all
$1\leq i\leq n$.
\end{lemma}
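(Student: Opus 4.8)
The plan is to deduce the statement from the previous lemma, applied not over $K$ but over the constant field $C(K)$. The point of departure is the observation that, for each index $i$, the hypothesis ``$y_i$ satisfies a linear differential equation with constant coefficients'' is precisely the statement that $\Ann(y_i)$ is a non-zero ideal of $C(K)[X]$.

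First I would use that $C(K)$ is a field, so that $C(K)[X]$ is a principal ideal domain; thus for each $i$ I can write $\Ann(y_i) = (Q_i)$ for some non-zero $Q_i \in C(K)[X]$. The goal is now to manufacture a single non-zero ideal of $C(K)[X]$ contained in every $\Ann(y_i)$ simultaneously. Setting $Q = Q_1 \cdots Q_n$, the fact that $C(K)[X]$ is an integral domain and each $Q_i$ is non-zero gives $Q \neq 0$, while $Q_i \mid Q$ gives $(Q) \subseteq \Ann(y_i)$ for every $i$. Equivalently, the intersection $I = \bigcap_{i=1}^n \Ann(y_i)$ is a non-zero ideal, and it is here that the finiteness of the family $(y_i)$ is used.

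With $I$ in hand, I would invoke the previous lemma with base field $C(K)$: since $I$ is a non-zero ideal of $C(K)[X]$, it contains a non-zero $p$-polynomial $P$. Because $P \in I \subseteq \Ann(y_i)$ for each $i$, the definition of the annihilator yields $P(\del)(y_i) = 0$ for all $1 \le i \le n$, which is exactly the conclusion.

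I do not anticipate a genuine obstacle here: the argument is a routine combination of the PID structure of $C(K)[X]$ with the preceding lemma. The only two points that require care are, first, recalling that the annihilators live over the constant field $C(K)$ and not over $K$, so that the preceding lemma must be read with $C(K)$ playing the role of its field; and second, reducing to a single ideal before applying that lemma, for which the finiteness of $n$ is essential, since an infinite family need not admit a common non-zero annihilator.
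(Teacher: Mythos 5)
Your proposal is correct and follows essentially the same route as the paper: both show the intersection $I=\bigcap_{i=1}^n \Ann(y_i)$ is a non-zero ideal of $C(K)[X]$ (using that $C(K)[X]$ is a domain and the family is finite) and then apply the preceding lemma over the field $C(K)$ to extract a non-zero $p$-polynomial in $I$. Your explicit use of the PID structure is a harmless elaboration of the paper's one-line domain argument.
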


\begin{proof}
Let $I=\bigcap_{i=1}^n \Ann(y_i)$. Since $\Ann(y_i)\neq\{0\}$ for all
$1\leq i\leq n$ and $C(K)[X]$ is a domain, $I\neq\{0\}$.
From the lemma, it follows that there exists a non-zero $p$-polynomial $P$ in
$C(K)$ such that $P\in I$, so in particular $P(y_i)=0$ for all $1\leq i\leq n$.
\end{proof}

\begin{prop}\label{derpk}
If $(K,\del)$ is a differential field of characteristic $p>0$ and $P$ is a
$p$-polynomial in $C(K)$, then the $C(K)$-endomorphism $P(\del)$ is a
derivation which commutes with $\del$.
\end{prop}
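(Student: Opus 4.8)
The plan is to expand $P(\del)$ as a $C(K)$-linear combination of iterated derivations and then reduce the statement to two facts already at hand: that each $\del^{p^j}$ is a derivation, and that the coefficients $a_j$ lie in $C(K)$ and are therefore annihilated by $\del$. Writing $P=\sum_{j=0}^n a_j X^{p^j}$ with $a_j\in C(K)$, the endomorphism in question is $P(\del)=\sum_{j=0}^n a_j\,\del^{p^j}$.

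First I would check that $P(\del)$ is a derivation. Each $\del^{p^j}$ with $j\geq 1$ is a derivation by the second of the two characteristic-$p$ properties recorded above, and for $j=0$ the map is simply $\del$. Multiplying any derivation $D$ by an element $c$ of $K$ again yields a derivation, since $(cD)(ab)=c\bigl(D(a)b+aD(b)\bigr)=(cD)(a)\,b+a\,(cD)(b)$ by commutativity of $K$, and a finite sum of derivations is again a derivation because the Leibniz rule is additive in the operator. Hence each summand $a_j\del^{p^j}$ is a derivation and so is their sum $P(\del)$. This is precisely where the $p$-polynomial hypothesis is indispensable: for a power $\del^k$ with $k$ not a power of $p$, the Leibniz expansion of $\del^k(ab)$ carries surviving cross terms with nonvanishing binomial coefficients, so a general polynomial in $\del$ would not be a derivation.

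For the commutation with $\del$ I would argue term by term. Fixing $j$ and $v\in K$, applying the Leibniz rule to the derivation $\del$ gives $\del\bigl(a_j\del^{p^j}(v)\bigr)=\del(a_j)\,\del^{p^j}(v)+a_j\,\del^{p^j+1}(v)$, and since $a_j\in C(K)$ we have $\del(a_j)=0$, leaving $a_j\,\del^{p^j+1}(v)=a_j\,\del^{p^j}\bigl(\del(v)\bigr)$. Thus $\del$ commutes with each $a_j\del^{p^j}$, and summing over $j$ yields $\del\,P(\del)=P(\del)\,\del$. I do not expect any genuine obstacle here: the entire content lies in seeing which hypotheses do the work, namely that the restriction to $p$-powers is what makes the summands derivations, while the constancy of the coefficients delivers both the commutation with $\del$ and the $C(K)$-linearity implicit in $P(\del)$ being a $C(K)$-endomorphism. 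The only point I would state with care is the claim that $\del^{p^j}$ is a derivation for each $j$, which I invoke directly from the second listed characteristic-$p$ property with $e=j$.
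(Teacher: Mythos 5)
Your proof is correct and follows the same route as the paper, which simply invokes the fact that each $\del^{p^j}$ is a derivation; you fill in the routine details (that $C(K)$-linear combinations of derivations are derivations, and that $\del(a_j)=0$ gives the commutation). No issues.
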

\begin{proof}
This follows from the fact that for all $j\geq 1$, $\del^{p^j}$ is a
derivation.
\end{proof}

We will make use of the following notation.
\begin{lemma}\label{Qdel}
Let $(K,\del)$ be a differential field and
$Q=\sum_{i=0}^n a_i*X_i\in K[X]$ be a polynomial.
We set
$$Q^{\del} = \sum_{i=0}^n \del_K(a_i)*X^i\enspace.$$
If $u\in K$ then $$\del(Q(u))=\del(u)*Q'(u)+Q^{\del}(u)\enspace.$$
\end{lemma}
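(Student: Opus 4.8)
The plan is to prove the identity $\del(Q(u)) = \del(u) \, Q'(u) + Q^{\del}(u)$ by a direct computation that separates the contribution of differentiating the coefficients $a_i$ from the contribution of differentiating the powers $u^i$. Writing $Q(u) = \sum_{i=0}^n a_i u^i$ and applying the derivation $\del$ term by term, the Leibniz rule gives $\del(a_i u^i) = \del(a_i) u^i + a_i \del(u^i)$, so summing produces two groups of terms: one group is $\sum_i \del(a_i) u^i$, which is exactly $Q^{\del}(u)$ by definition, and the other group is $\sum_i a_i \del(u^i)$, which I expect to recognize as $\del(u) \, Q'(u)$.

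The key intermediate step is therefore to establish the power rule $\del(u^i) = i \, \del(u) \, u^{i-1}$ for each exponent $i \geq 0$, so that $\sum_i a_i \del(u^i) = \del(u) \sum_i i \, a_i u^{i-1} = \del(u) \, Q'(u)$. I would prove the power rule by induction on $i$: the base case $i = 0$ gives $\del(1) = 0$ (since $\del$ is a group homomorphism, $\del(1) = \del(1 \cdot 1) = 2\del(1)$ forces $\del(1)=0$), and the inductive step uses the Leibniz rule $\del(u^{i+1}) = \del(u) \, u^i + u \, \del(u^i)$ together with the induction hypothesis. One point deserving care in characteristic $p$ is that this power rule, and hence the final formula, remains valid even when some exponents are divisible by $p$: the coefficient $i$ appearing in $\del(u^i)$ is the integer $i$ reduced modulo $p$, and this is precisely the same reduction that defines the formal derivative $Q'$, so the two sides match automatically with no special handling needed.

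I do not anticipate a genuine obstacle here, as the statement is a formal consequence of the Leibniz rule applied coefficient-by-coefficient; the only subtlety worth flagging is the interplay between the formal (characteristic $p$) derivative $Q'$ and the integer exponents, which the induction on the power rule handles cleanly. Assembling the two groups of terms then yields the claimed identity directly.
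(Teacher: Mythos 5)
Your proof is correct. Note that the paper states this lemma without any proof at all (it is introduced essentially as a piece of notation, the identity being regarded as routine), so there is no argument of the author's to compare against; your verification --- applying the Leibniz rule term by term to $\sum_i a_i u^i$, splitting off $\sum_i \del(a_i)u^i = Q^{\del}(u)$, and establishing $\del(u^i) = i\,\del(u)\,u^{i-1}$ by induction --- is exactly the standard computation that fills this gap, and your remark that the integer $i$ reduces modulo $p$ identically in the power rule and in the formal derivative $Q'$ correctly dispatches the only point where characteristic $p$ could have caused a mismatch.
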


\section{Linear differential fields}

\subsection{Linear fields}

\begin{definition}
Let $(K,\del_K)$ be a differential field, let $y$ be an element of $K$, and
let $C$ be a sub-differential field of $K$.
We will say that $y\in K$ satisfies a linear differential equation with
coefficients in $C$ if there exists a non-zero polynomial $P\in C[X]$ such
that $P(\del)(y)=0$.
\end{definition}

\begin{definition}
A differential field $(K,\del_K)$ is linear if every element satisfies a
linear differential equation with coefficients in $C(K)$.
\end{definition}

\subsection{Linear extensions}

This section establishes that some common type of extensions of
linear differential fields are linear.

\begin{prop}
Let $(K,\del_K)$ be a differential field.
The set $F$ of elements of $K$ that satisfy a
linear differential equation with coefficients in $C(K)$
is a sub-differential field of $K$.
\end{prop}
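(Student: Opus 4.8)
The plan is to verify directly that $F$ is closed under the field operations and under $\del$, with the multiplicative closure as the crucial point. Write $F=\{y\in K:\Ann(y)\neq\{0\}\}$. First I would observe that $C(K)\subseteq F$: every constant $c$ satisfies $\del(c)=0$, so $X\in\Ann(c)$; in particular $0,1\in F$, so $F$ is nonempty.

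For additivity and for closure under $\del$, I would argue as follows. Given $y_1,y_2\in F$, Lemma~\ref{peqn} supplies a single non-zero $p$-polynomial $P\in C(K)[X]$ with $P(\del)(y_1)=P(\del)(y_2)=0$. Since $P(\del)$ is a $C(K)$-linear endomorphism of $K$, we get $P(\del)(c_1 y_1+c_2 y_2)=0$ for all $c_1,c_2\in C(K)$, so $F$ is a $C(K)$-subspace. For closure under $\del$, note that $P(\del)$ commutes with $\del$ (it is a polynomial in $\del$), hence $P(\del)(\del(y))=\del(P(\del)(y))=0$ whenever $P(\del)(y)=0$; thus $\del(y)\in F$ whenever $y\in F$.

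The heart of the matter is multiplicative closure, and this is where the characteristic-$p$ hypothesis does the real work. Given $y_1,y_2\in F$, take the common $p$-polynomial $P$ from Lemma~\ref{peqn} as above, and set $\delta=P(\del)$. The decisive point is Proposition~\ref{derpk}: because $P$ is a $p$-polynomial, $\delta$ is itself a \emph{derivation} of $K$, not merely a linear map. Since $\delta(y_1)=\delta(y_2)=0$, the Leibniz rule gives $\delta(y_1 y_2)=\delta(y_1)y_2+y_1\delta(y_2)=0$, so $P\in\Ann(y_1 y_2)$ and $y_1 y_2\in F$. The same mechanism handles inverses: if $y\in F$ is non-zero, pick a non-zero $p$-polynomial $P$ with $P(\del)(y)=0$ and set $\delta=P(\del)$; applying the derivation $\delta$ to $y\cdot y^{-1}=1$ gives $y\,\delta(y^{-1})=-\delta(y)\,y^{-1}=0$, whence $\delta(y^{-1})=0$ and $y^{-1}\in F$.

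I expect the main obstacle to be precisely this multiplicative closure: in characteristic zero one would need Wronskians or the theory of products of solution spaces, but here the obstruction dissolves once we replace an arbitrary annihilating operator by a $p$-polynomial, since $P(\del)$ then becomes a derivation and the Leibniz rule does everything. Collecting the above, $F$ is a subfield of $K$ stable under $\del$, i.e.\ a sub-differential field.
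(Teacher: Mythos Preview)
Your proof is correct and follows essentially the same route as the paper: use Lemma~\ref{peqn} to find a common non-zero $p$-polynomial $P$ annihilating the given elements, invoke Proposition~\ref{derpk} to see that $P(\del)$ is a derivation, and conclude that $\ker P(\del)$ is a differential subfield. The paper compresses the Leibniz-rule verifications into the single remark that the kernel of a derivation is a field, but the argument is the same.
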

\begin{proof}
Let $u$ and $v$ be two elements of $F$. By Lemma~\ref{peqn},
there exists a $p$-polynomial $P\in C(K)[X]$ such that $P(\del)(u)=0$ and
$P(\del)(v)=0$. Since $P(\del)$ is a derivation, $\ker P(\del)$ is a field,
so $P(\del)(u+v)=0$, $P(\del)(u*v)=0$, and if $u\neq 0$, $P(\del)(u^{-1})=0$
and $P(\del)(\del(u))=\del(P(\del)(u))=0$.
We conclude that $F$ is a differential field.
\end{proof}

\begin{lemma}\label{linfield}
Let $(K,\del_K)$ be a linear differential field, and let
$(L,\del_L)$ be a differential extension of $(K,\del_K)$.
If $L/K$ is algebraic separable, then $(L,\del_L)$ is linear.
\end{lemma}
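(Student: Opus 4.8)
The plan is to show that an arbitrary $y\in L$ satisfies a linear differential equation with coefficients in $C(L)$, and in fact to produce one whose coefficients already lie in the smaller field $C(K)\subseteq C(L)$. First I would fix $y$ and let $m=\sum_{i=0}^{d}c_i X^i\in K[X]$ be its minimal polynomial over $K$, normalised to be monic, so that $c_d=1$ and $m(y)=0$. Since $(K,\del_K)$ is linear, each coefficient $c_i$ satisfies a linear differential equation with coefficients in $C(K)$. Applying Lemma~\ref{peqn} to the finite family $(c_i)_{0\le i\le d}$ then yields a single non-zero $p$-polynomial $P\in C(K)[X]$ with $P(\del)(c_i)=0$ for every $i$.

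The key step is to transfer this information from the coefficients to $y$ itself, using the operator $D=P(\del)$. By Proposition~\ref{derpk}, $D$ is a derivation of $L$ which commutes with $\del$. Applying the Leibniz rule for the derivation $D$ to the relation $m(y)=0$ — that is, the analogue of Lemma~\ref{Qdel} with $D$ in place of $\del_K$ — gives $0=D(m(y))=D(y)\,m'(y)+m^{D}(y)$, where $m^{D}=\sum_i D(c_i)X^i$. Because $D(c_i)=0$ for all $i$, the term $m^{D}(y)$ vanishes, leaving $D(y)\,m'(y)=0$.

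Here is where separability enters, and I expect it to be the decisive hypothesis rather than a technical nuisance: since $L/K$ is separable, the minimal polynomial $m$ is a separable polynomial, so $m'(y)\neq 0$. As $L$ is a field, we may cancel $m'(y)$ to conclude $D(y)=P(\del)(y)=0$. Thus $y$ is annihilated by the non-zero polynomial $P$, whose coefficients lie in $C(K)\subseteq C(L)$, so $y$ satisfies a linear differential equation with coefficients in $C(L)$. Since $y\in L$ was arbitrary, $(L,\del_L)$ is linear.

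The only delicate points are the existence of the simultaneous annihilator $P$ — which is exactly Lemma~\ref{peqn}, powered by the linearity of $K$ — and the non-vanishing of $m'(y)$ — which is exactly separability. Granting these two inputs, the conclusion follows from the single derivation computation above, so the substance of the proof is in correctly assembling the earlier results rather than in any long calculation.
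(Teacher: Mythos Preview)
Your proof is correct and essentially identical to the paper's own argument: take the minimal polynomial of $y$ over $K$, use Lemma~\ref{peqn} to find a common $p$-polynomial annihilator $P$ for its coefficients, apply the derivation $P(\del)$ to the relation $m(y)=0$ via the Leibniz rule, and cancel $m'(y)$ using separability. The paper performs the same computation, only writing out the sum $\sum_i P(\del)(u_i y^i)$ explicitly rather than invoking the $m^{D}$ notation.
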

\begin{proof}
Let $y$ be an element of $L$ and $U=\sum_{i=0}^n u_i*X^i$ be its minimal
polynomial over $K$.
By Lemma~\ref{peqn}, there exists a non-zero $p$-polynomial $P$ in $C(K)[X]$
such that $P(\del)(u_i)=0$ for all $0\leq i \leq n$.
Since $P(\del)$ is a derivation,
$$\sum_{i=0}^n P(\del)(u_i*y^i)=\sum_{i=0}^n P(\del)(u_i)*y^i+\sum_{i=0}^n u_i*i*P(\del)(y)*y^{i-1}\enspace,$$
so
$$\sum_{i=0}^n P(\del)(u_i*y^i)=P(\del)(y)*U'(y)\enspace.$$
Since $U(y)=0$, it follows that $\sum_{i=0}^n P(\del)(u_i*y^i)=0$, and so
$P(\del)(y)*U'(y)=0$.
Since $y$ is separable, $U'(y)\neq 0$, so $P(\del)(y)=0$.
\end{proof}

\begin{lemma}\label{linexp}
Let $(K,\del_K)$ be a linear differential field, and let $(L,\del_L)$ be a
differential extension of $(K,\del_K)$. If $L/K$ is of exponential type
then $(L,\del_L)$ is linear.
\end{lemma}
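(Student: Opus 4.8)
The plan is to reduce the claim to the single generator $y$ and then to control its iterated $p$-power derivatives. By definition of exponential type we may fix $u\in K^{\*}$ and $y$ (which we may assume non-zero) with $L=K(y)$ and $\del(y)=\del(u)*y$. Let $F$ be the set of elements of $L$ satisfying a linear differential equation with coefficients in $C(L)$; by the proposition above, applied to $L$, it is a sub-differential field of $L$. Since $K$ is linear and $C(K)\subseteq C(L)$, every element of $K$ lies in $F$, so $K\subseteq F$. As $F$ is a field containing $K$ and $y$, it suffices to show $y\in F$: then $F\supseteq K(y)=L$ and $(L,\del_L)$ is linear. So the whole problem is to exhibit one non-zero polynomial over $C(L)$ annihilating $y$.

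Set $v=\del(u)$, so $\del(y)=v*y$, and for $j\geq 0$ put $v_j=\del^{p^j}(y)/y\in L$. The heart of the argument is the identity $v_j-\del^{p^j}(u)\in C(L)$. To obtain it I would compute $\del\big(\del^{p^j}(y)\big)$ in two ways. Using that $\del^{p^j}$ is a derivation commuting with $\del$ and that $\del(y)=v*y$,
$$\del\big(\del^{p^j}(y)\big)=\del^{p^j}(v*y)=\del^{p^j}(v)*y+v*\del^{p^j}(y)\enspace,$$
while differentiating $\del^{p^j}(y)=v_j*y$ directly gives $\del\big(\del^{p^j}(y)\big)=\big(\del(v_j)+v*v_j\big)*y$. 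Comparing the two and cancelling $y$ yields $\del(v_j)=\del^{p^j}(v)=\del\big(\del^{p^j}(u)\big)$, so $\del\big(v_j-\del^{p^j}(u)\big)=0$ and $c_j:=v_j-\del^{p^j}(u)\in C(L)$. Spotting that the high derivatives $\del^{p^j}(y)$ are governed, up to an additive constant, by the $\del^{p^j}(u)$ is the main obstacle; once the right quantity $v_j$ is differentiated the identity falls out, and it is exactly here that the characteristic-$p$ feature ``$\del^{p^e}$ is a derivation'' is used.

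It remains to assemble the equation. Since $K$ is linear, $u$ satisfies a linear differential equation with constant coefficients, so by Lemma~\ref{peqn} there is a non-zero $p$-polynomial $R=\sum_{k=0}^m r_k*X^{p^k}\in C(K)[X]$ with $R(\del)(u)=\sum_{k=0}^m r_k*\del^{p^k}(u)=0$. Writing $\del^{p^k}(y)=\big(c_k+\del^{p^k}(u)\big)*y$ from the previous step and applying $R(\del)$ to $y$, the terms involving $\del^{p^k}(u)$ recombine into $R(\del)(u)=0$ and cancel, leaving
$$R(\del)(y)=\Big(\sum_{k=0}^m r_k*c_k\Big)*y=c*y\enspace,$$
where $c=\sum_{k=0}^m r_k*c_k\in C(L)$ because each $r_k\in C(K)\subseteq C(L)$ and each $c_k\in C(L)$. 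Hence $\big(R(\del)-c\big)(y)=0$, and $R(X)-c$ is a non-zero polynomial over $C(L)$ (its leading term is $r_m*X^{p^m}$). Thus $y$ satisfies a linear differential equation with constant coefficients, so $y\in F$ and therefore $L=F$ is linear.
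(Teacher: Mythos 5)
Your proof is correct and follows the same overall strategy as the paper: establish that $\del^{p^j}(y)=(\del^{p^j}(u)+c_j)*y$ with $c_j$ a constant, then use Lemma~\ref{peqn} to produce a non-zero $p$-polynomial $R\in C(K)[X]$ annihilating $u$, so that $R(\del)(y)=c*y$ with $c$ constant and $y$ is killed by $R-c$. The one genuine difference is how the key identity is obtained. The paper invokes Carlitz's explicit formula for the iterated $p$-power derivatives of an exponential, which exhibits $c_j=\sum_{i<j}{(\del^{p^i}(u))}^{p^{j-i}}$ as a sum of $p$-th powers and hence an element of $C(K)$. You instead differentiate $v_j=\del^{p^j}(y)/y$ directly, using only that $\del^{p^j}$ is a derivation commuting with $\del$, and conclude that $v_j-\del^{p^j}(u)$ lies in $C(L)$. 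Your route is shorter and self-contained (no appeal to Carlitz), at the cost of locating the constants only in $C(L)$ rather than $C(K)$; since linearity of $L$ only requires coefficients in $C(L)$, this loss is harmless here. You are also cleaner at the two ends of the argument: you start from the correct exponential relation $\del(y)=\del(u)*y$ (the paper's opening sentence mistakenly writes the logarithmic one), and you conclude by reducing to $y\in F$ via the proposition that the set of elements satisfying a constant-coefficient equation is a sub-differential field, which is the step the paper actually needs where it cites Lemma~\ref{linfield}.
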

\begin{proof}
The field $L$ is generated by an element $y$ such that there exists
$u\in K^{\*}$ with $\del(y)=\del(u)/u$, or equivalently $u*\del(y)=\del(u)$.
This is a classical differential equation,
whose derivatives are given by the multivariate Bell polynomials.
In particular in characteristic $p$, Carlitz~\cite{C} Formula~(1.4)
gives the equation
\begin{equation}
\del^{p^r}(y)=(\sum_{i=0}^r {\del^{p^i}(u)}^{p^{r-i}})*y\enspace.
\end{equation}
We note that $$c_r=\sum_{i=0}^{r-1} {\del^{p^i}(u)}^{p^{r-i}})$$ belongs to
$C(K)$, and we rewrite the equation as
$$\del^{p^r}(y)= (\del^{p^r}(u) +c_r)*y\enspace.$$

By Lemma~\ref{peqn}, there exists a family $(a_j)_{j=1}^k$ of
elements of $C(K)$ not all equal to $0$ such that
$\sum_{j=0}^k a_j*\del^{p^k}(u)=0$.
It follows that
\begin{eqnarray}
\sum_{j=0}^k a_j*\del^{p^j}(y)&=&(\sum_{j=0}^k a_j*\del^{p^r}(u))*y+(\sum_{j=0}^k a_j*c_j)*y\\
                               &=&(\sum_{j=0}^k a_j*c_j)*y\enspace.
\end{eqnarray}
Since the $a_j$ are not all equal to $0$ and $\sum_{j=0}^k a_j*c_j$ belongs to
$C(K)$, $y$ satisfies a linear differential equation with coefficients in
$C(K)$, so by Theorem~\ref{linfield} $L$ is linear.
\end{proof}

\begin{lemma}
Let $(K,\del_K)$ be a linear differential field, and let $(L,\del_L)$ be a
differential extension of $(K,\del_K)$. If $L/K$ is of logarithmic type
then $(L,\del_L)$ is linear.
\end{lemma}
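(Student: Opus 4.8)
The plan is to follow the pattern of Lemma~\ref{linexp}, but the logarithmic case is genuinely easier: here the derivative of the generator already lies in the base field, so no analogue of the Carlitz iterated-derivative formula is needed. Recall that $L$ is generated over $K$ by an element $y$ for which there is $u\in K^{\*}$ with $\del(u)=\del(y)\,u$, equivalently $\del(y)=\del(u)/u\in K$. The strategy is first to produce a constant-coefficient linear differential equation for the single generator $y$, and then to upgrade this to all of $L$ using the fact that linear elements form a differential subfield.

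For the generator: since $(K,\del_K)$ is linear and $\del(y)=\del(u)/u$ is an element of $K$, Lemma~\ref{peqn}, applied to the single element $\del(u)/u$, provides a non-zero $p$-polynomial $P\in C(K)[X]$ with $P(\del)\big(\del(y)\big)=0$. Now $P(\del)$ is a polynomial in $\del$, hence commutes with $\del$, so the operators $P(\del)\circ\del$ and $(XP)(\del)$ agree; evaluating at $y$ gives $(XP)(\del)(y)=P(\del)\big(\del(y)\big)=0$. Thus $y$ is annihilated by the polynomial $R=XP$, which is non-zero because $P$ is, and whose coefficients lie in $C(K)$.

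To pass from the generator to all of $L$, I would invoke the earlier Proposition stating that, in any differential field, the elements satisfying a constant-coefficient linear differential equation form a differential subfield. Applying it to $(L,\del_L)$, let $F_L$ be the differential subfield of $L$ consisting of such elements, taken with coefficients in $C(L)$. Since $C(K)\subseteq C(L)$, linearity of $K$ gives $K\subseteq F_L$, and the previous step gives $y\in F_L$ (its annihilator $R$ has coefficients in $C(K)\subseteq C(L)$). As $F_L$ is a field containing both $K$ and $y$, it contains $K(y)=L$; therefore $F_L=L$, which is exactly the statement that $(L,\del_L)$ is linear. Note that this argument is indifferent to whether $y$ is transcendental or algebraic over $K$, since it only uses that $L=K(y)$.

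I do not expect a serious obstacle here; the only care needed is bookkeeping about the field of constants. One must note that $C(K)\subseteq C(L)$ so that equations written over $C(K)$ remain valid over $C(L)$, and that no enlargement of the constant field is forced. It is also worth recording the trivial point that multiplying a non-zero $p$-polynomial $P$ by $X$ keeps it non-zero, so that $R=XP$ is a legitimate annihilator. The conceptual content is simply the operator identity $P(\del)\circ\del=(XP)(\del)$ together with the subfield Proposition, which together promote linearity from the generator $y$ to the whole extension $L=K(y)$.
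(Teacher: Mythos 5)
Your proof is correct, but it takes a genuinely different and shorter route than the paper's. The paper treats the logarithmic case symmetrically with the exponential one: it applies Carlitz's formula to the relation $u*\del(y)=\del(u)$ to write $\del^{p^r}(u)=(\del^{p^r}(y)+c_r)*u$ with $c_r\in C(L)$, takes a $p$-polynomial annihilating $u$ (available because $K$ is linear), and extracts from the resulting identity the constant-coefficient equation $\sum_{j} a_j*\del^{p^j+1}(y)=0$ for $y$. You instead exploit the asymmetry special to the logarithmic case: $\del(y)=\del(u)/u$ already lies in $K$, so linearity of $K$ hands you a non-zero $P\in C(K)[X]$ annihilating $\del(y)$, and the trivial operator identity $(XP)(\del)=P(\del)\circ\del$ converts this into the non-zero annihilator $XP$ of $y$ --- no iterated-derivative formula is needed. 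Both arguments finish, in substance, the same way: the elements of $L$ satisfying a constant-coefficient linear equation form a differential subfield containing $K$ and $y$, hence all of $L=K(y)$. You carry out this last step explicitly and cite the correct Proposition, whereas the paper's text points to Lemma~\ref{linfield} (which concerns separable algebraic extensions and is evidently a mis-reference for that Proposition). Two minor remarks: your appeal to Lemma~\ref{peqn} for a $p$-polynomial is superfluous, since any non-zero element of $\Ann(\del(u)/u)\cap C(K)[X]$ works and you never use the $p$-polynomial structure; and what the paper's heavier computation buys is an explicit $p$-polynomial equation for $y$ of the shape reused elsewhere in the paper, while your argument buys brevity and dispenses with Carlitz's formula entirely.
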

\begin{proof}
There exists a generator $y$ of $L$ and $u\in K^{\*}$
such that $\del(y)=\del(u)/u$, or equivalently $u*\del(y)=\del(u)$.
By applying Carlitz's formula, we have
\begin{equation}
\del^{p^r}(u)=(\sum_{i=0}^r (\del^{p^i}(y))^{p^{r-i}})*u\enspace.
\end{equation}
We note that $c_r=\sum_{i=0}^{r-1} (\del^{p^i}(y))^{p^{r-i}})$ belongs to
$C(L)$ and we rewrite the equation as $\del^{p^r}(u)= (\del^{p^r}(y) +c_r)*u$.
By Lemma~\ref{peqn}, there exists a family $(a_j)_{j=1}^k$ of
elements of $C(K)$ not all equal to $0$ such that
$$\sum_{j=0}^k a_j*\del^{p^j}(u)=0\enspace.$$
  From
\begin{eqnarray}
\sum_{j=0}^k a_j*\del^{p^j}(u)&=&(\sum_{j=0}^k a_j*\del^{p^j}(y)+\sum_{j=0}^k a_j*c_j)*u
\end{eqnarray}
it follows that $$\sum_{j=0}^k a_j*\del^{p^j}(y)+\sum_{j=0}^k a_j*c_j=0\enspace,$$
and since $\sum_{j=0}^k a_j*c_j$ belongs to $C(L)$,
$\sum_{j=1}^k a_j*\del^{p^j+1}(y)=0$.
Since the $a_j$ are not all equal to $0$, $y$ satisfies a linear differential
equation with coefficients in $C(K)$, so by Theorem~\ref{linfield} $L$ is
linear.
\end{proof}

We have thus proved:

\begin{theorem}\label{diffeqn}
Let $(K,\del_K)$ be a linear differential field, and let $(L,\del_L)$ be a
differential extension of $(K,\del_K)$. If $L/K$ is elementary,
then $(L,\del_L)$ is linear.
\end{theorem}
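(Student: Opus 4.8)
The plan is to obtain the statement from the three lemmas just established by a straightforward induction on the length of an elementary tower. By the definition of an elementary extension, there is a chain of differential fields
$$L_0 = K \subseteq L_1 \subseteq \cdots \subseteq L_n = L$$
in which each step $L_{i+1}/L_i$ is algebraic separable, logarithmic, or exponential. I would induct on $n$, the number of steps.

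For the base case $n=0$ we have $L = K$, which is linear by hypothesis. For the inductive step, suppose that $(L_{n-1},\del_{L_{n-1}})$ has already been shown to be linear; this is exactly the inductive hypothesis applied to the truncated tower $L_0 \subseteq \cdots \subseteq L_{n-1}$, which is elementary of length $n-1$. The final step $L_n/L_{n-1}$ falls into one of the three admissible cases, and in each case the appropriate lemma, with $L_{n-1}$ playing the role of the linear base field, delivers the conclusion: Lemma~\ref{linfield} for an algebraic separable step, Lemma~\ref{linexp} for an exponential step, and the preceding lemma on logarithmic extensions for a logarithmic step. In every case $(L_n,\del_{L_n}) = (L,\del_L)$ is linear, which completes the induction.

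The content of the argument is entirely carried by the three lemmas; the theorem itself is only their concatenation along the tower, so there is no genuine obstacle beyond bookkeeping. The single point that forces the inductive structure, rather than a single direct application of a lemma to $L/K$, is that each lemma requires its base field to be linear: I cannot invoke a lemma on an intermediate step $L_{i+1}/L_i$ until the smaller field $L_i$ has itself been shown to be linear. The induction supplies precisely this, propagating linearity one step at a time up the tower, each $L_i$ serving as the linear base for the next extension $L_{i+1}/L_i$. Since each lemma concludes outright that the larger field is a linear differential field, this hypothesis is available at every step, and the chain closes without further difficulty.
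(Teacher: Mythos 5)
Your proposal is correct and matches the paper's intent exactly: the paper offers no written proof beyond the remark ``We have thus proved,'' meaning the theorem is precisely the concatenation of Lemmas~\ref{linfield}, \ref{linexp}, and the logarithmic lemma by induction on the length of the elementary tower, which is what you carry out. Your explicit observation that each lemma needs its base field to be linear, so linearity must be propagated step by step, is the right (and only) point of substance.
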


The following is a generalization of Lemma~\ref{linexp}.

\begin{theorem}\label{KtoCL}
Let $(K,\del_K)$ be a linear differential field, and $(L,\del_L)$ a
differential extension of $(K,\del_K)$. If $y\in L$ statisfies
a linear differential with coefficients in $K$ then it satisfies a
linear differential equation with coefficients in $C(K)$.
\end{theorem}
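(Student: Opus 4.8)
The plan is to pass to a first–order system, use the linearity of $K$ to produce a single derivation $D$ that kills all the coefficients simultaneously, recognise the action of $D$ on solutions as multiplication by a matrix $C$, and then force the characteristic polynomial of $C$ to have constant coefficients. First I would reduce to minimal order: writing the equation of least order satisfied by $y$ as $\sum_{i=0}^n b_i\del^i(y)=0$ with $b_n\neq0$, minimality makes $y,\del(y),\dots,\del^{n-1}(y)$ independent over $K$, so $\vec Y=(y,\del y,\dots,\del^{n-1}y)^{\top}$ satisfies $\del(\vec Y)=A\vec Y$ for the companion matrix $A\in M_n(K)$. Since $K$ is linear, every entry of $A$ satisfies a linear differential equation with coefficients in $C(K)$, so Lemma~\ref{peqn} yields a non-zero $p$-polynomial $P\in C(K)[X]$ with $P(\del)$ annihilating every entry of $A$. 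Set $D=P(\del)$: by Proposition~\ref{derpk} it is a derivation commuting with $\del$, and because $P$ is a $p$-polynomial, $D$ kills every element of $C(L)$ as well.

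Next I would track how $D$ moves solutions. For every $k$ one has $\del^k(\vec Z)=A_k\vec Z$ for a matrix $A_k\in M_n(K)$ depending only on $A$, not on the solution $\vec Z$; hence $D$ acts on all solution vectors through the single matrix $C:=\sum_j a_jA_{p^j}\in M_n(K)$, i.e. $D(\vec Z)=C\vec Z$. Differentiating $D(\vec Y)=C\vec Y$ and using $\del D=D\del$, $D(A)=0$, and the $K$-independence of the entries of $\vec Y$ gives the key identity
$$\del(C)=AC-CA.$$
Applying $D$ to the original equation also shows that each $D^m(y)$ is again a solution, and since $D$ preserves the solution space $C(L)$-linearly, this already produces—cheaply—an equation for $y$ with coefficients in $C(L)$. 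The real content, and what I expect to be the main obstacle, is to push these coefficients down to $C(K)$.

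For that descent I would exploit the horizontality $\del(C)=AC-CA$. By Jacobi's formula for the derivative of a determinant over $K(T)$,
$$\del(\det(TI-C))=\det(TI-C)\,\mathrm{tr}((TI-C)^{-1}(CA-AC))=0,$$
the trace vanishing because $CA-AC$ equals the commutator of $A$ with $TI-C$. Hence every coefficient of $\chi_C(T)=\det(TI-C)$ lies in $K$ and is killed by $\del$, so $\chi_C\in C(K)[X]$. Finally, $D$ preserving the $C(L)$-space of solutions and satisfying $D(\vec Z)=C\vec Z$ means that, in the basis whose columns are the solution vectors of a $C(L)$-basis, the matrix $M$ of $D$ is intertwined with $C$; from $\chi_C(C)=0$ one deduces $\chi_C(M)=0$, so $\chi_C$ annihilates $D$ on the solution space. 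Evaluating at $y$ gives $\chi_C(D)(y)=0$, that is $\chi_C(P(\del))(y)=0$, a non-zero equation in $C(K)[X]$.

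The delicate point is exactly the step from $C(L)$ to $C(K)$: the naive solution-space argument only bounds the order and yields constants in $C(L)$, and it is the identity $\del(C)=AC-CA$ together with the determinant computation that forces $\chi_C$, and hence the final equation, to be defined over $C(K)$; this is also what recovers Lemma~\ref{linexp} as the rank-one case, where $C$ is a scalar. Two harmless checks remain: that $\chi_C(P(X))$ does not vanish identically (it is monic of positive degree), and that the constants appearing are genuinely in $K\cap C(L)=C(K)$ rather than in a larger constant field, which is immediate since the coefficients of $\chi_C$ already lie in $K$.
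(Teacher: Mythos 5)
Your proof is correct, but it descends from $C(L)$ to $C(K)$ by a genuinely different mechanism than the paper. Both arguments share the same skeleton: work with the space spanned by $y,\del(y),\dots,\del^{n-1}(y)$, use Lemma~\ref{peqn} to produce one $p$-polynomial $P$ killing all the coefficients at once, and exploit that $D=P(\del)$ is a derivation commuting with $\del$. The paper then goes coordinate-free over the subfield $F=\ker P(\del_K)$ of $K$ (which contains the coefficients and satisfies $C(F)=C(K)$): $D$ restricted to the $F$-span $E$ of the derivatives of $y$ is an $F$-endomorphism, and if $Q\in F[X]$ is its monic minimal polynomial, differentiating $Q(D)(v)=0$ yields $Q^{\del}(D)=0$ on $E$, whence $Q^{\del}=0$ by minimality of degree and $Q\in C(K)[X]$. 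You instead stay over $K$, record the failure of $K$-linearity of $D$ in the commutator identity $\del(C)=AC-CA$, and kill the derivative of the characteristic polynomial with Jacobi's formula (a universal integer polynomial identity, so valid in characteristic $p$). The paper's route buys the minimal polynomial, hence a possibly lower-degree annihilator, and avoids matrices and Cayley--Hamilton; yours buys an explicit matrix $C=\sum_j a_j A_{p^j}$ that one can actually compute with, in the spirit of the paper's Airy example. One step of yours deserves tightening: the passage from $\chi_C(C)=0$ to $\chi_C(D)(y)=0$ via the matrix $M$ of $D$ on a $C(L)$-basis of solutions silently uses that the associated Wronskian matrix $W$ has full column rank, since $CW=WM$ alone does not force $\chi_C(M)=0$. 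You can bypass this detour entirely: $D$ is a derivation commuting with $\del$ that annihilates the entries of $A$ and the constants $a_j$, hence annihilates every $A_k$ and therefore $C$ itself; consequently $D^m(\vec Y)=C^m\vec Y$ for all $m$, and Cayley--Hamilton gives $\chi_C(D)(\vec Y)=\chi_C(C)\vec Y=0$ directly, whose first coordinate is the desired relation $(\chi_C\circ P)(\del)(y)=0$ with $\chi_C\circ P$ a non-zero element of $C(K)[X]$.
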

\begin{proof}
Let $u_0,\ldots,u_n$ in $K$ with $u_n\neq 0$ be such that
$$u_0*y+u_1*\del_L(y)+u_2*\del_L(\del_L(y))\ldots+u_n*\del_L^n(y)=0\enspace.$$
By Lemma~\ref{peqn}, there exists a non-zero $p$-polynomial $P\in C(K)[X]$
such that $P(\del_K)(u_i)=0$ for all $0\leq i \leq n$.
Since $P(\del_K)$ is a derivation, the $C(K)$ vector space $F=\ker P(\del_K)$
is a subfield of $K$ that contains $(u_i)_{i=0}^n$ and with constant field
$C(F)=C(K)$.

Let $E$ be the sub-$F$-vector space of $L$ spanned by $(\del^k(y))_{k=0}^{n-1}$.
It follows from the differential equation that $E$ is stable by $\del_L$, and
the restriction $D$ of $P(\del_L)$ to $E$ is an $F$-endomorphism of $E$.
Let $Q\in F[X]$ be the monic minimal polynomial of $D$.
If $v\in E$ then $Q(D)(v)=0$, and so $\del_L(Q(D)(v))=0$.
Since $\del$ and $D$ commute,
$$\del_L(Q(D)(v))=Q^{\del}(D)(v)+Q(D)(\del_L(v))\enspace.$$
Since $Q(D)=0$, it follows that $Q^{\del}(D)(v)=0$ for all $v\in E$.
Since $Q$ is the minimal polynomial of $D$ and the degree of $Q^{\del}$
is strictly less than the degree of $Q$ (since $Q$ is monic), it follows
that $Q^\del=0$, so $Q$ belongs to $C(K)[X]$.
In particular it follows that $Q(P)$ is a non-zero element
of $C(K)[X]$ which satisfies $(Q(P))(\del_L)(y)=0$.
\end{proof}

\begin{example}
Let $K=(\FF_3(X),\del_K)$ be such that $\del_K(X)=1$,
and let $L$ be the differential extension $(\FF_3(X,Y,Y_1),\del_L)$ where
$\del_L(X)=1$, $\del_L(Y)=Y_1$ and $\del_L(Y_1)=X*Y$.

The element $Y$ satisfies the linear equation $\del^2(Y)=X*Y$
(the Airy~\cite{A} equation)
with coefficients in $F=\FF_3(X)$ and $\del^3(X)=0$.
By applying $\del$, it follows that $\del^3(Y)=Y+X*Y_1$ and
$\del^3{Y_1}=\del^4(Y)=2*Y_1+X^2*Y$.
Thus the matrix of $\del^3$ in the basis $(Y,Y_1)$ is
$\mat{1}{X^2}{X}{2}$, whose minimal polynomial is $P(T)=T^2-X^3-1$.
Thus $Y$ satisfies the equation $\del^6(Y)=(X^3+1)*Y$ with coefficients in
$C(F)=\FF_3(X^3)$.
\end{example}

\section{Integration}

\begin{lemma}
Let $(K,\del)$ be a differential field of characteristic $p>0$.
If $u\in K$ is such that $\del^{p^k}(u)\in C(K)^{\times}$ and $y\in K$ is such
that $\del(y)=\del(u)/u$, then $\del^{p^{k+1}}(y)\in C(K)^{\times}$.
\end{lemma}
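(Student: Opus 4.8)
The plan is to reduce everything to the Carlitz formula already used in the logarithmic case. Since $y$ satisfies $\del(y)=\del(u)/u$, i.e. $u*\del(y)=\del(u)$, that formula gives $\del^{p^r}(u)=(\sum_{i=0}^r(\del^{p^i}(y))^{p^{r-i}})*u$ for every $r\geq 0$. Writing $T_r=\del^{p^r}(u)/u=\sum_{i=0}^r(\del^{p^i}(y))^{p^{r-i}}$, I would first record the recursion $T_r=T_{r-1}^p+\del^{p^r}(y)$: raising $T_{r-1}$ to the $p$-th power shifts each exponent $p^{r-1-i}$ up to $p^{r-i}$, leaving only the top term $\del^{p^r}(y)$ unmatched. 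Solving for the derivative of $y$ yields the key identity
\[\del^{p^r}(y)=\frac{\del^{p^r}(u)}{u}-\left(\frac{\del^{p^{r-1}}(u)}{u}\right)^p,\]
valid for all $r\geq 1$.

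Two elementary observations about characteristic $p$ then finish the argument. First, $u^p\in C(K)^{\times}$: indeed $\del(u^p)=p*u^{p-1}*\del(u)=0$, and $u\neq 0$ since otherwise $\del^{p^k}(u)=0\notin C(K)^{\times}$. Second, writing $c=\del^{p^k}(u)\in C(K)^{\times}$, the element $c$ is killed by $\del$, so every higher derivative $\del^j(u)$ with $j>p^k$ vanishes; in particular $\del^{p^{k+1}}(u)=0$, because $p^{k+1}=p*p^k>p^k$.

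Now I would specialise the key identity to $r=k+1$. Its first term is $\del^{p^{k+1}}(u)/u=0$ by the second observation, while its second term is $(\del^{p^k}(u)/u)^p=(c/u)^p$. Hence $\del^{p^{k+1}}(y)=-c^p/u^p$. Both $c^p$ and $u^p$ lie in $C(K)^{\times}$ (the former because $c\in C(K)^{\times}$ by hypothesis, the latter by the first observation), so their quotient does as well, and therefore $\del^{p^{k+1}}(y)\in C(K)^{\times}$, as claimed.

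I do not anticipate a genuine obstacle: the computation is forced once the two characteristic-$p$ facts are in hand. The only points demanding care are the exponent bookkeeping behind the recursion $T_r=T_{r-1}^p+\del^{p^r}(y)$, and applying Carlitz's formula with the logarithmic relation in the correct direction ($\del(y)=\del(u)/u$, equivalently $u*\del(y)=\del(u)$); getting either of these backwards would scramble the final ratio, but neither presents a real difficulty.
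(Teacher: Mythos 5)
Your proof is correct, and after the common starting point (Carlitz's formula applied to $\del(u)=\del(y)*u$) it finishes differently from the paper. The paper sets $D=\del^{p^k}$, rewrites the formula as $D(u)/u=D(y)+c_k$ with $c_k\in C(K)$, and then applies $D$ a further $p-1$ times: using that $D$ is a derivation killing $C(K)$ and that the $(p-1)$-st derivative of $X^{p-1}$ is $-1$ (Wilson), it computes $D^{p-1}(u^{p-1})=-D(u)^{p-1}$ and deduces $\del^{p^{k+1}}(y)=-D(u)^p/u^p$. You instead compare Carlitz's formula at the two consecutive levels $p^k$ and $p^{k+1}$ via the telescoping $T_{r}=T_{r-1}^p+\del^{p^r}(y)$, and observe that $\del^{p^{k+1}}(u)=0$ because $\del^{p^k}(u)$ is already a constant; this yields the same closed form $\del^{p^{k+1}}(y)=-c^p/u^p$ with $c=\del^{p^k}(u)$. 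Your route is arguably cleaner: it avoids Wilson's theorem and the computation of $D^{p-1}$ on $u^{p-1}$ entirely, at the cost of invoking Carlitz's formula one level higher. Both arguments correctly conclude since $c^p$ and $u^p$ are nonzero constants.
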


\begin{proof}
By applying again Carlitz's formula to the identity $\del(u)=\del(y)*u$,
we obtain
\begin{equation}
\del^{p^k}(u) = \left(\sum_{i=0}^k {\del^{p^i}(y)}^{p^{k-i}}\right)*u\enspace.
\end{equation}
After setting $D=\del^{p^k}$ and
$c_k=\sum_{i=0}^{k-1} {\del^{p^i}(y)}^{p^{k-i}}\in C(K)$,
the equation reads
\begin{equation}
D(u)/u = D(y) + c_k\enspace.
\end{equation}
Since the endomorphism $D$ is a derivation of $K$ whose kernel contains $C(K)$,
if $F$ is a polynomial with coefficients in $C(K)$ then
$D(F(u))=D(u)*F'(u)$. Since the $p-1$-st derivative of the polynomial
$X^{p-1}$ is equal to $-1$ by Wilson's theorem,
we obtain the formula: $D^{p-1}(u^{p-1})=-D(u)^{p-1}$.
Noting that $D^{p-1}=\del^{p^{k+1}-p^k}$, it follows that
$-D(u)^p/u^p = \del^{p^{k+1}}(y)$, and since $D(u)\in C(K)^{\times}$ we
have proved that $\del^{p^{k+1}}(y)\in C(K)^{\times}$.
\end{proof}

\begin{prop}\label{loglogp}
Let $(K,\del)$ be a differential field of characteristic $p>0$,
$n\geq 0$ an integer, and $u\in K$ such that $\del^{p^n}(u)=1$.
There exists a logarithmic extension $L/K$ of type $\log(u)$
and $y\in L$ such that $\del^{p^{n+1}}(y)=1$.
\end{prop}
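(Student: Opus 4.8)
The plan is to form $L$ by formally adjoining a ``logarithm'' $z$ of $u$, to read off $\del^{p^{n+1}}(z)$ from the lemma immediately preceding this proposition, and then to divide by the resulting constant so as to land exactly on $1$. First I would note that $u\neq 0$, since $\del^{p^n}(u)=1\neq 0$; hence $u\in K^\times$ and $g:=\del(u)/u$ is a well-defined element of $K$. I would then build the extension explicitly: let $z$ be transcendental over $K$, put $L=K(z)$, and extend $\del_K$ to the unique derivation $\del_L$ of $L$ determined by $\del_L(z)=g$. Such a derivation exists and is unique precisely because $z$ is transcendental over $K$, and by construction $\del_L(u)=\del_L(z)\,u$, so $(L,\del_L)$ is a logarithmic extension of $(K,\del_K)$ of type $\log(u)$ with generator $z$.

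Next I would apply the previous lemma to the differential field $(L,\del_L)$, taking its ``$u$'' to be $u$, its ``$y$'' to be $z$, and its exponent to be $n$. Its hypotheses hold in $L$: since $\del_L$ restricts to $\del_K$ on $K$ we have $\del_L^{p^n}(u)=1\in C(L)^\times$, and $\del_L(z)=\del_L(u)/u$ by construction. The lemma then gives that $c:=\del_L^{p^{n+1}}(z)$ lies in $C(L)^\times$. (Carrying the lemma's computation through with $\del_L^{p^n}(u)=1$ in fact pins down $c=-u^{-p}$, which already belongs to $C(K)^\times$, but only membership in $C(L)^\times$ is needed below.)

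Finally I would rescale. The operator $\del_L^{p^{n+1}}$ is a derivation whose kernel contains the constants (it is $\del^{p^{n+1}}$, a derivation by Proposition~\ref{derpk}), so since $c\in C(L)^\times$ the element $y:=z/c\in L$ satisfies
$$\del_L^{p^{n+1}}(y)=c^{-1}\,\del_L^{p^{n+1}}(z)=c^{-1}c=1\enspace.$$
The single delicate point is precisely this last step: the previous lemma only produces $\del^{p^{n+1}}(z)$ as a \emph{nonzero constant}, not as $1$, and the task is to normalize it without disturbing the logarithmic structure already built. This is harmless because $\del_L^{p^{n+1}}$ annihilates $C(L)$, so dividing by $c$ costs nothing; concretely one may take $y=-u^p z$. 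Thus the two steps needing care are the construction of $L$ and this normalization, while the arithmetic heart of the statement is supplied directly by the cited lemma.
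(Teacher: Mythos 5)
Your proposal is correct and follows essentially the same route as the paper: build $L=K(z)$ with $\del(z)=\del(u)/u$, invoke the preceding lemma to get $c=\del^{p^{n+1}}(z)\in C(L)^{\times}$, and normalize by setting $y=z/c$. You add the welcome explicit detail that $c=-u^{-p}$, hence $y=-u^p z$, but the argument is the paper's.
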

\begin{proof}
We denote by $L$ the field of rational functions $K(z)$.
We extend the derivation $\del_K$ to $L$ by setting $\del(z)=\del(u)/u$.
The extension $L/K$ is logarithmic of type $\log(u)$.
It follows from the lemma that $c=\del^{p^{k+1}}(z)\in C(L)^{\times}$,
so by setting $y=z/c$, it follows that $\del^{p^{k+1}}(z/c)=1$.
\end{proof}

It is possible to prove a more precise result:

\begin{prop}\label{loglogp2}
Let $(K,\del)$ be a differential field of characteristic $p>0$,
$n\geq 0$ an integer, and $u\in K$ such that $\del^{p^n}(u)=1$.
Then either there exists $y\in K$ such that $\del^{p^{n+1}}(y)=1$, or
there exists a differential extension $L/K$ of logarithmic type
such that $C(L)=C(K)*L^p$ and $y\in L$ such that $\del^{p^{n+1}}(y)=1$.
\end{prop}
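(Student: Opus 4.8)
The plan is to split into two cases according to whether the logarithmic derivative $\del(u)/u$ already admits an antiderivative inside $K$, since this is precisely the condition that controls whether the constant field grows in the extension. In the first case, suppose there exists $w\in K$ with $\del(w)=\del(u)/u$. Applying the lemma used to prove Proposition~\ref{loglogp} with $k=n$ (legitimate since $\del^{p^n}(u)=1\in C(K)^{\times}$) gives $\del^{p^{n+1}}(w)\in C(K)^{\times}$, and tracing the explicit formula in that lemma yields $\del^{p^{n+1}}(w)=-1/u^p$. Because $\del(u^p)=0$ the element $u^p$ lies in $C(K)$, and since $\del^{p^{n+1}}$ is a derivation by Proposition~\ref{derpk}, setting $y=-u^p\cdot w\in K$ gives $\del^{p^{n+1}}(y)=-u^p\cdot\del^{p^{n+1}}(w)=1$. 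This is the first alternative, with $y$ already in $K$.

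In the second case, suppose $\del(u)/u$ has no antiderivative in $K$. I then construct $L=K(z)$ with $z$ transcendental and $\del(z)=\del(u)/u$, exactly as in Proposition~\ref{loglogp}; the extension is logarithmic of type $\log(u)$, and the same computation produces $y=-u^p\cdot z\in L$ with $\del^{p^{n+1}}(y)=1$. The real work is to verify $C(L)=C(K)\cdot L^p$. Since $K^p\subseteq C(K)$ and $L^p=K^p(z^p)$, the compositum $C(K)\cdot L^p$ equals $C(K)(z^p)$; the inclusion $C(K)(z^p)\subseteq C(L)$ is immediate because $z^p$ and every element of $C(K)$ are constants, so it remains to prove $C(L)\subseteq C(K)(z^p)$.

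To compute the constants I first reduce to polynomials. Writing a constant as $g/h$ in lowest terms with $h$ monic, the relation $\del(g)h=g\del(h)$ forces $g\mid\del(g)$; as $\deg_z\del(g)\le\deg_z g$ this yields $\del(g)=\lambda g$ and hence $\del(h)=\lambda h$ for a common $\lambda\in K$, and inspecting the leading term of the monic $h$ (whose $\del$-image has strictly smaller $z$-degree) forces $\lambda=0$. Thus $g$ and $h$ are both constant polynomials, and it suffices to treat $g=\sum_i a_i z^i$ with $\del(g)=0$. Expanding with $\del(z)=\del(u)/u$ gives the recurrence $\del(a_i)=-(i+1)\cdot\frac{\del(u)}{u}\cdot a_{i+1}$ for all $i$. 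Here the hypothesis enters decisively: if $a_{i+1}\in C(K)$ and $(i+1)a_{i+1}\neq 0$, then $\del(u)/u$ would be the derivative of $-a_i/\bigl((i+1)a_{i+1}\bigr)\in K$, contradicting the absence of an antiderivative; hence $(i+1)a_{i+1}=0$ and $\del(a_i)=0$. Running this downward from the top coefficient, a single induction shows simultaneously that every $a_i\in C(K)$ and that $(i+1)a_{i+1}=0$ for all $i$, so $a_j=0$ whenever $p\nmid j$ and therefore $g\in C(K)[z^p]$. Applying this to both $g$ and $h$ gives $C(L)=C(K)(z^p)=C(K)\cdot L^p$, which completes the second alternative.

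I expect the constant-field computation of the third paragraph to be the main obstacle; everything hinges on the recurrence together with the assumption that $\del(u)/u$ has no antiderivative in $K$. That assumption is exactly what blocks the appearance of a ``new'' first-order constant such as $z-w$ (which is precisely what would occur, and spoil $C(L)=C(K)\cdot L^p$, in the first case had one built the transcendental extension anyway), thereby forcing every constant to be a rational function of $z^p$ over $C(K)$.
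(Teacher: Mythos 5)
Your proof is correct and follows essentially the same route as the paper: your case split on whether $\del(u)/u$ has an antiderivative in $K$ is the contrapositive of the paper's opening reduction (via the same lemma), and the decisive step --- the coefficient recurrence $\del(a_i)=-(i+1)a_{i+1}\del(u)/u$ combined with the non-existence of an antiderivative of $\del(u)/u$ in $K$ --- is identical. The only difference is cosmetic: you reduce a rational constant $g/h$ to polynomial constants by a lowest-terms divisibility argument, whereas the paper writes $f/g=(f g^{p-1})/g^p$ and uses that $g^p$ already lies in $C(L)\cap L^p$.
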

\begin{proof}
We assume that the equation $\del^{p^{n+1}}(y)=1$ has no solution $y\in K$.
By the lemma, this implies that the equation $\del(z)=\del(u)/u$ has no
solution $z\in K$.
We build $L=K(z)$ as in Proposition~\ref{loglogp}.
It only remains to prove that $C(L)=C(K)*L^p$.
An element of $C(L)$ can be written as $f(z)/g(z)$ where $f$ and $g$ are
two elements of $K[X]$. We set $Q=f*g^{p-1}\in K[X]$.
Since $f(z)/g(z) = Q(z)/g(z)^p$, it follows that $Q\in C(L)$ and that
proving $Q(z)\in C(K)*L^p$ will prove that $f(z)/g(z)\in C(K)*L^p$.

We write $Q=\sum_{i=0}^{m} a_i*X_i$ with $a_i\in K$.
Since $\del(Q(y))=0$, it follows that
$$\del(u)*Q'(y)+u*Q^{\del}(y)=0\enspace.$$
By identifying coefficients we find the equations
\begin{eqnarray}
u*\del(a_m)&=& 0 \\
\del(u)*(i+1)*a_{i+1} + u*\del(a_i) = 0
\end{eqnarray}
By using the fact that the equation $\del(u)*c = u*\del(z)$ with unknowns $c\in
C(K)$ and $z\in K$ only admits the solution $c=0$, $z=0$, it follows easily by
induction that $Q\in C(K)[X^p]$, so $Q(y)\in C(K)(y^p)=C(K)*L^p$.
\end{proof}

\begin{prop}\label{loglog}
Let $(K,\del)$ be a differential field of characteristic $p>0$,
$n$ a positive integer, and $u\in K$ such that $\del^{n}(u)=1$.
Then either there exists $y\in K$ such that $\del^{n+1}(y)=1$, or
there exists a logarithmic extension $L/K$ such that $C(L)=C(K)*L^p$
and $y\in L$ such that $\del^{n+1}(y)=1$.
\end{prop}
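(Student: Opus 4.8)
The plan is to deduce the statement from the power-of-$p$ case already established in Proposition~\ref{loglogp2}, by \emph{sandwiching} the exponent $n+1$ between two consecutive powers of $p$. The point is that, while the order of the iterated derivation may only be \emph{lowered} for free (by applying $\del$), Proposition~\ref{loglogp2} provides the one nontrivial move \emph{upward}, namely from an exponent $p^m$ to the next power $p^{m+1}$. Combining one upward move with a controlled number of downward moves will hit the exponent $n+1$ exactly.

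First I would choose $m$ to be the largest non-negative integer with $p^m\le n$; this is possible because $n\ge 1=p^0$, and by maximality $p^{m+1}>n$, so that
$$p^m\le n<n+1\le p^{m+1}\enspace.$$
Next I would set $\tilde u=\del^{n-p^m}(u)$, which lies in $K$ since $n-p^m\ge 0$ and which satisfies $\del^{p^m}(\tilde u)=\del^n(u)=1$. Thus $\tilde u$ meets the hypothesis of Proposition~\ref{loglogp2} for the exponent $p^m$, and applying that proposition yields one of two outcomes: either there is $w\in K$ with $\del^{p^{m+1}}(w)=1$, or there is a logarithmic extension $L/K$ with $C(L)=C(K)*L^p$ and some $w\in L$ with $\del^{p^{m+1}}(w)=1$.

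In either outcome I would then descend back to the desired order by setting $y=\del^{p^{m+1}-(n+1)}(w)$, which is legitimate because $p^{m+1}-(n+1)\ge 0$; one computes
$$\del^{n+1}(y)=\del^{p^{m+1}}(w)=1\enspace.$$
In the first outcome $y\in K$, and in the second $y\in L$, so this reproduces exactly the dichotomy of the statement. I expect no serious obstacle once the sandwiching is in place: the whole difficulty of moving upward in the order is quarantined inside Proposition~\ref{loglogp2}, which already delivers a genuine logarithmic extension together with the constant-field identity $C(L)=C(K)*L^p$, so that condition is inherited with no further work. The only genuinely new idea is the arithmetic observation that the consecutive powers $p^m$ and $p^{m+1}$ straddle $n+1$, which is what lets the rigid power-of-$p$ result imply the flexible statement for arbitrary $n$.
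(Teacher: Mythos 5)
Your proposal is correct and follows essentially the same route as the paper: write $n$ with $p^m\le n<p^{m+1}$, reduce to $\tilde u=\del^{n-p^m}(u)$ so that Proposition~\ref{loglogp2} applies at exponent $p^m$, and then descend from $p^{m+1}$ to $n+1$ by applying $\del$ exactly $p^{m+1}-(n+1)$ times. The paper phrases the dichotomy contrapositively (assuming no solution exists in $K$) and leaves the construction of $\tilde u$ implicit, but these are presentational differences only.
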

\begin{proof}
Write $n=p^k+l$ with $l\geq 0$ and $k$ as large as possible, and assume
that the equation $\del^{n+1}(y)=1$ has no solution in $K$.
This implies that the equation $\del^{p^{k+1}}(y)=1$ has no solution in $K$.
By Proposition~\ref{loglogp2}, we build $L$ satisfying the above conditions
and $z$ such that $\del^{p^{k+1}}(z)=1$.
We set $y=\del^{p^{k+1}-(n+1)}(z)$ and it follows that $\del^{n+1}(y)=1$.
\end{proof}

\section{Antiderivable fields}

\begin{definition}
A differential field $(K,\del)$ is \emph{antiderivable} if it is linear and
such that $\del$ takes the value $1$ on $K$.
\end{definition}

\begin{example}
The differential field $(\FF_p(X),\del)$ where $\del$ is the standard
derivation is antiderivable.

Indeed, every element $u$ satisfies $\del^p(u)=0$, and $\del(X)=1$.
\end{example}

\begin{prop}
An elementary extension of an antiderivable differential field is antiderivable.
\end{prop}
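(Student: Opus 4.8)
The plan is to verify directly the two clauses in the definition of antiderivability for $(L,\del_L)$: that $L$ is linear, and that $1$ lies in the image of $\del_L$. Essentially all of the work has already been carried out in the preceding sections, so I expect the argument to be short.

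For linearity I would simply invoke Theorem~\ref{diffeqn}. Since $(K,\del_K)$ is antiderivable it is in particular linear, and by hypothesis $L/K$ is elementary. Theorem~\ref{diffeqn} then yields at once that $(L,\del_L)$ is linear, which is the first required clause.

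For the value-$1$ condition I would observe that it is inherited from the base field for free. Antiderivability of $K$ provides some $u\in K$ with $\del_K(u)=1$. Because $(L,\del_L)$ is a differential extension of $(K,\del_K)$, the derivation $\del_L$ restricts to $\del_K$ on $K$, so $\del_L(u)=\del_K(u)=1$ with $u\in K\subseteq L$. Hence $\del_L$ takes the value $1$ on $L$. Combining the two clauses, $(L,\del_L)$ is linear and satisfies $1\in\del_L(L)$, which is precisely the assertion that $L$ is antiderivable.

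I do not anticipate a genuine obstacle in this proposition as stated. The only substantive ingredient is the linearity-propagation Theorem~\ref{diffeqn}, whose proof---assembled from the three lemmas treating the algebraic separable, exponential, and logarithmic steps of the tower---is where the real difficulty of this circle of results lies; that hard part is already in hand. The existence of an antiderivative of $1$ transports up the tower trivially, since the same witness $u$ serves in every differential extension of $K$, and therefore requires no induction on the length of the elementary tower.
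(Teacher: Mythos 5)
Your proposal is correct and follows exactly the paper's own argument: linearity is transferred by Theorem~\ref{diffeqn}, and the same witness $u\in K$ with $\del(u)=1$ serves in $L$ since $\del_L$ restricts to $\del_K$. No further comment is needed.
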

\begin{proof}
This follows from Theorem~\ref{diffeqn} and the fact that if $\del$ takes the
value $1$ over $K$, it takes it a fortiori over any larger field.
\end{proof}

\begin{lemma}\label{logpol}
Let $(K,\del)$ be a differential field, and $u$ and $v$ be two elements of $K$.
If there exists an integer $n\geq 1$ such that $\del^n(u)=0$ and $\del^{n-1}(v)=1$,
then $u$ belongs to the vector space generated by $(\del^k(v))_{k=0}^{n-1}$ over $C(K)$.
\end{lemma}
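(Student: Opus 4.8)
The plan is to exhibit $u$ explicitly as a $C(K)$-linear combination of the iterated derivatives $\del^0(v),\del^1(v),\dots,\del^{n-1}(v)$, constructed by a descending peeling-off argument. First I would record the structural consequences of the hypotheses: since $\del^{n-1}(v)=1$, applying $\del$ once more gives $\del^n(v)=0$, so both $u$ and $v$ lie in the kernel of $\del^n$, and the family $(\del^k(v))_{k=0}^{n-1}$ satisfies $\del(\del^k(v))=\del^{k+1}(v)$ for $k<n-1$ together with $\del(\del^{n-1}(v))=0$. Thus the $C(K)$-span $V$ of this family is stable under $\del$, and $\del$ acts on it as a nilpotent shift whose $(n-1)$-st power carries $v$ to the nonzero constant $1$.

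The core of the argument is the following subclaim, proved by induction on $j$ for $1\le j\le n$: if $w\in K$ satisfies $\del^j(w)=0$, then $w$ lies in the $C(K)$-span of $\del^{n-j}(v),\dots,\del^{n-1}(v)$. For the base case $j=1$, the hypothesis $\del(w)=0$ means $w\in C(K)$, and since $\del^{n-1}(v)=1$ we have $w=w\cdot\del^{n-1}(v)$, as required. For the inductive step, suppose the claim holds for $j-1$ and take $w$ with $\del^j(w)=0$. Then $\del^{j-1}(w)$ is annihilated by $\del$, hence is a constant $c\in C(K)$. I then set $w'=w-c\,\del^{n-j}(v)$ and compute $\del^{j-1}(w')=\del^{j-1}(w)-c\,\del^{n-1}(v)=c-c=0$, so that $w'$ is killed by $\del^{j-1}$. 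By the inductive hypothesis $w'$ lies in the span of $\del^{n-j+1}(v),\dots,\del^{n-1}(v)$, and adding back $c\,\del^{n-j}(v)$ places $w$ in the span of $\del^{n-j}(v),\dots,\del^{n-1}(v)$.

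Finally I would apply the subclaim to $w=u$ with $j=n$: the hypothesis $\del^n(u)=0$ then yields $u$ in the $C(K)$-span of $\del^0(v),\dots,\del^{n-1}(v)$, which is exactly the conclusion. The only points demanding care are the index bookkeeping—checking that $n-j\ge 0$ throughout so that every $\del^{n-j}(v)$ is defined, and that the chosen multiple $c\,\del^{n-j}(v)$ cancels precisely the top surviving derivative of $w$, so that the order of nilpotency genuinely drops by one at each stage. This cancellation, together with the verification that $\del^{j-1}(w)$ really is a constant (immediate from $\del^j(w)=0$), is the crux; once it is in place the induction runs automatically. I note that nothing here uses the characteristic, so the lemma holds over an arbitrary differential field, and that the family $(\del^k(v))_{k=0}^{n-1}$ is in fact $C(K)$-linearly independent (apply $\del^{n-1-j}$ to a minimal vanishing relation), so that the representation of $u$ is even unique—though only its existence is needed here.
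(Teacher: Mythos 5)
Your proof is correct, and it is essentially the paper's argument: both are inductions that reduce the order of nilpotency by one at each step, using that $v$ supplies an element whose $(n-1)$-st derivative is the constant $1$. The only cosmetic difference is that you peel off the top-order term $c\,\del^{n-j}(v)$ while keeping $v$ fixed, whereas the paper inducts on $n$ by passing to $\del(u)$ and $\del(v)$ and then recovers the constant of integration at the end; the two bookkeeping schemes are interchangeable.
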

\begin{proof}
This is a classical result. If $n=1$, then $v=1$ and $u\in C(K)$, so the
result is true. Otherwise assume by induction that the result is
true for $n$. Let $u$ and $v$ be such that $\del^{n+1}(u)=0$ and $\del^n(v)=1$.
It follows that $\del(u)$ and $\del(v)$ satisfy $\del^n(\del(u))=0$ and
$\del^{n-1}(\del(v))=1$.
By the induction hypothesis, there exist elements $(c_i)_{k=0}^{n-1}$ in $C(K)$
and not all zero, such that
$\del(u)=\sum_{k=0}^{n-1} c_i*\del^k(\del(v))=0$.
By setting $S=\sum_{k=0}^{n-1} c_i*\del^k(v)$, it follows that $c_n=u-S$
belongs to $C(K)$. Since $\del^n(v)=1$ we have
$u=\sum_{k=0}^{n} c_i*\del^k(v)$, which concludes the proof.
\end{proof}

The following result justifies the definition:

\begin{theorem}\label{integ}
Let $(K,\del)$ be an antiderivable differential field.
Every element $u\in K$ admits an antiderivative in an extension of logarithmic
type.
\end{theorem}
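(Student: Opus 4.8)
The plan is to split $u$ into a part on which $\del$ acts invertibly, which can be integrated inside $K$, and a part on which $\del$ acts nilpotently, which can be integrated in a tower of logarithmic extensions by means of Lemma~\ref{logpol}.

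First I would exploit linearity. Since $(K,\del)$ is antiderivable it is linear, so $\Ann(u)$ contains a non-zero polynomial $R\in C(K)[X]$. Writing $R=X^m\,S$ with $m\geq 0$ and $S(0)\neq 0$, the factors $X^m$ and $S$ are coprime in $C(K)[X]$, so a Bézout relation $a\,X^m+b\,S=1$ with $a,b\in C(K)[X]$ yields a decomposition $u=u_0+u_1$ where $u_0=(b\,S)(\del)(u)$ and $u_1=(a\,X^m)(\del)(u)$. Both lie in $K$, and since $R(\del)(u)=0$ one checks directly that $\del^m(u_0)=0$ and $S(\del)(u_1)=0$. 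For the invertible part, writing $S(X)=S(0)+X\,h(X)$ gives $S(0)\,u_1=-\del\!\left(h(\del)(u_1)\right)$, so $y_1=-S(0)^{-1}h(\del)(u_1)\in K$ satisfies $\del(y_1)=u_1$; thus $u_1$ already admits an antiderivative in $K$.

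It then remains to integrate the nilpotent part $u_0$, and this is where I use that $\del$ takes the value $1$ on $K$: there is $w\in K$ with $\del(w)=1$. Starting from $\del^{1}(w)=1$ and applying Proposition~\ref{loglog} repeatedly, raising the order by one at each step, I obtain a tower of logarithmic extensions $K=L_0\subseteq\cdots\subseteq L_r=L$ together with an element $V\in L$ such that $\del^m(V)=1$. Setting $v=\del(V)$ we have $\del^{m-1}(v)=1$, so Lemma~\ref{logpol}, applied in $L$ to $u_0$ (which still satisfies $\del^m(u_0)=0$), provides constants $c_k\in C(L)$ with $u_0=\sum_{k=0}^{m-1}c_k\,\del^k(v)=\sum_{k=0}^{m-1}c_k\,\del^{k+1}(V)$. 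As the $c_k$ are constant, $y_0=\sum_{k=0}^{m-1}c_k\,\del^k(V)\in L$ satisfies $\del(y_0)=u_0$, and $y=y_0+y_1\in L$ is an antiderivative of $u$ in a tower of logarithmic extensions.

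The main obstacle is the nilpotent part: one must manufacture inside a logarithmic extension a single element $V$ of sufficiently high order with $\del^m(V)=1$, which is exactly what iterating Proposition~\ref{loglog} supplies. Once $V$ is available, the passage from Lemma~\ref{logpol} to an explicit antiderivative is immediate, because the coefficients are constants and each $\del^k(v)$ is itself the derivative of $\del^{k-1}(V)$. A minor point to verify is that the degenerate cases are covered: if $m=0$ then $\del$ is already invertible on the cyclic module generated by $u$ and an antiderivative exists in $K$, while if $m=1$ one has $v=1$ and may take $V=w\in K$, so no extension is needed.
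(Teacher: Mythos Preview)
Your argument is correct in spirit and very close to the paper's: both proofs split $u$ into a piece that is visibly a derivative in $K$ and a $\del$-nilpotent remainder, then invoke Proposition~\ref{loglog} and Lemma~\ref{logpol} for the remainder. Your B\'ezout splitting $R=X^mS$ is in fact a cleaner packaging of the paper's ad hoc choice $v=\sum a_i\,\del^{i-1}(u)$, $w=\del(v)-u$.

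There is, however, one genuine discrepancy with the stated conclusion. The theorem promises an antiderivative in \emph{a} logarithmic extension (a single step), and the paper achieves this with \emph{one} use of Proposition~\ref{loglog}: from $\del^n(w)=0$ and the minimality of $n$ one gets $\del^{n-1}(w)=c\in C(K)^\times$, hence $\del^{n-1}(w/c)=1$, and a single application of Proposition~\ref{loglog} produces $z$ with $\del^n(z)=1$. Your proof instead starts from $\del(w)=1$ and iterates Proposition~\ref{loglog} up to level $m$, which in general yields a \emph{tower} of logarithmic extensions, not a single one. That is weaker than what is claimed.

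The fix is easy and stays entirely within your framework: once you have $u_0$ with $\del^m(u_0)=0$, replace $m$ by the minimal $m'$ with $\del^{m'}(u_0)=0$ (assuming $u_0\neq 0$); then $\del^{m'-1}(u_0)$ is a non-zero constant, and $u_0$ itself (suitably scaled) serves as the input to a single call of Proposition~\ref{loglog}, exactly as in the paper. With this tweak your proof matches the theorem as stated.
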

\begin{proof}
By hypothesis, $u$ satisfies a linear differential equation with coefficients
in $C(K)$. Thus there exist $n\geq 0$ and $(a_i)_{i=1}^k$ in $C(K)$ such that
$\del^n(u)=\sum_{i=1}^k a_i*\del^{n+i}(u)$.
Set $v=\sum_{i=1}^k a_i*\del^{i-1}(u)$ and $w=\del(v)-u$, so that
$\del^n(w)=0$.
If $n=0$ then $v$ is an antiderivative of $u$.
Otherwise, we may assume that $n$ is minimal for the given $(a_i)_{i=1}^k$,
in other words that $\del^{n-1}(w)\neq 0$, and so $\del^{n-1}(w)$ is a
non-zero constant in $C(K)$. By Proposition~\ref{loglog},
there exist a logarithmic extension $L$ and an element $z\in L$ such
that $\del^n(z)=1$. Since $\del^n(w)=0$,
Lemma~\ref{logpol} implies that $w=\sum_{k=0}^{n-1} c_i*\del^k(\del(z))$,
which leads to
$u=\del\left(v-\sum_{k=0}^{n-1} c_i*\del^k(w)\right)$.
Thus $u$ admits an antiderivative.
\end{proof}

We have the slightly stronger statement:
\begin{theorem}
Let $(K,\del)$ be an antiderivable differential field, and $u\in K$.
Either $u$ admits an antiderivative in $K$, or it admits
an antiderivative in a logarithmic extension $L$ such that
$C(L)=C(K)*L^p$.
\end{theorem}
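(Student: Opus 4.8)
The plan is to re-run the proof of Theorem~\ref{integ} almost verbatim, but to replace the single invocation of Proposition~\ref{loglog} by its full statement, which already packages both the dichotomy and the constant-field constraint.

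First I would reduce exactly as in Theorem~\ref{integ}. Since $K$ is linear, $u$ satisfies a constant-coefficient equation $\del^n(u)=\sum_{i=1}^k a_i*\del^{n+i}(u)$, and setting $v=\sum_{i=1}^k a_i*\del^{i-1}(u)$ and $w=\del(v)-u$ yields $\del^n(w)=0$ for some minimal $n\geq 0$. If $n=0$ then $w=0$, so $\del(v)=u$ and $u$ already has an antiderivative $v\in K$; this is the first alternative of the theorem. For $n\geq 1$ minimality forces $\del^{n-1}(w)=c\in C(K)^{\times}$, hence $\del^{n-1}(w/c)=1$.

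The decisive step is then to apply Proposition~\ref{loglog} to the element $w/c$ with parameter $n-1$ (when $n=1$ the required $z$ with $\del(z)=1$ is supplied directly by antiderivability, so it already lies in $K$). This produces exactly the two promised cases. In the first, there is $z\in K$ with $\del^n(z)=1$; applying Lemma~\ref{logpol} inside $K$ then writes $w=\sum_{k=0}^{n-1} c_i*\del^{k+1}(z)$ with $c_i\in C(K)$, whence $u=\del\!\left(v-\sum_{k=0}^{n-1} c_i*\del^k(z)\right)$ and the antiderivative lies in $K$. In the second, Proposition~\ref{loglog} furnishes a logarithmic extension $L$ with $C(L)=C(K)*L^p$ together with $z\in L$ satisfying $\del^n(z)=1$; the identical computation, now carried out in $L$ (so that the coefficients $c_i$ produced by Lemma~\ref{logpol} lie in $C(L)$), exhibits the antiderivative $v-\sum_{k=0}^{n-1} c_i*\del^k(z)\in L$, and the required constant-field identity is precisely the one handed to us by the proposition.

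I do not expect a genuine obstacle here: the entire strengthening is inherited from the refined Proposition~\ref{loglog}, and the computation is unchanged from Theorem~\ref{integ}. The one point that deserves care, and the only place where the two branches really differ, is the bookkeeping of ambient fields. In the first case one must verify that the solution $z$ of $\del^n(z)=1$, the coefficients $c_i$, and therefore the antiderivative all remain inside $K$, so that Lemma~\ref{logpol} is legitimately applied with constants in $C(K)$; in the second case one simply transports the same formula into $L$ and reads off $C(L)=C(K)*L^p$ directly from Proposition~\ref{loglog}.
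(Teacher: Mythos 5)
Your proof is correct and is precisely the argument the paper intends: the paper states this theorem without proof as a refinement of Theorem~\ref{integ}, and your rerun of that proof with the two-case Proposition~\ref{loglog} substituted for its existence-only use supplies exactly the missing details. The two points you flag --- handling $n=1$ directly via antiderivability so that $z$ stays in $K$, and tracking whether the constants from Lemma~\ref{logpol} live in $C(K)$ or $C(L)$ --- are indeed the only places requiring care, and you resolve both correctly.
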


\begin{example}
We look for the characterictic $p$ analogue of the
antiderivative of $exp(exp(x))$.

Consider the differential field $K=(\FF_p(E,F),\del)$,
where $\del(E)=E$ and $\del(F)=E*F$.
  From the obvious equation $\del^p(E)-\del(E)=0$ and Lemma~\ref{linexp}, we
obtain the equation $\del^p(F)-\del(F)=E^p*F$. We conclude that
$\frac{\del^{p-1}(F)-F}{E^p}$ is an antiderivative of $F$.
\end{example}

\begin{example}
Consider the differential field $K=(\FF_p(X),\del_K)$ ,
where $\del_K(X)=1$, and we look for an antiderivative of $X^{p-1}$.
The differential field $L=(\FF_p(X,Y),\del_L)$, where $\del_L(X)=1$ and $\del_L(Y)=1/X$, is a logarithmic extension of $K$ and we see that
$\del_L(Y*X^p)=X^{p-1}$.
\end{example}

\begin{theorem}
Let $(L,\del)$ be an elementary extension of $(\FF_p(X),\del)$ where $\del$ is
the standard derivation. Every element $u\in L$ admits an
antiderivative in some elementary differential extension of $(\FF_p(X),\del)$.
\end{theorem}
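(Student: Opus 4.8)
The plan is to deduce the statement directly from the machinery already developed, since the hypotheses are tailored precisely for this situation. First I would observe that the base field $(\FF_p(X),\del)$ has been shown to be antiderivable, and that by the proposition asserting that an elementary extension of an antiderivable field is antiderivable, the field $(L,\del)$ is itself antiderivable. This is the only place where the specific choice of base field enters: it is what guarantees both that $L$ is linear (via Theorem~\ref{diffeqn}) and that $\del$ takes the value $1$ on $L$ (since it already does so on $\FF_p(X)\subseteq L$).

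Next I would apply Theorem~\ref{integ} to the antiderivable field $(L,\del)$ and to the given element $u$. This produces an extension $M/L$ of logarithmic type together with an element of $M$ whose derivative is $u$; in other words $u$ acquires an antiderivative after a single logarithmic step over $L$. It then remains to check that $M$ is an elementary extension of $\FF_p(X)$, not merely of $L$. For this I would invoke the transitivity of the elementary condition: concatenating a tower of allowed extensions witnessing that $L/\FF_p(X)$ is elementary with the one-step logarithmic tower witnessing $M/L$ yields a tower from $\FF_p(X)$ to $M$ in which every link is algebraic separable, logarithmic, or exponential. Hence $M/\FF_p(X)$ is elementary by definition, and $u$ has an antiderivative in it.

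I expect no serious obstacle here: the result is essentially a corollary obtained by composing the antiderivability of $L$ with Theorem~\ref{integ} and the (immediate) transitivity of the elementary-tower condition. The only point requiring a moment's care is confirming that the extension furnished by Theorem~\ref{integ} is genuinely of an allowed type, so that appending it to the tower for $L$ preserves elementariness; this is exactly what the statement of Theorem~\ref{integ} provides, since it yields an extension of logarithmic type, which is one of the three kinds permitted in the definition of an elementary extension.
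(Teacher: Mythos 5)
Your argument is correct and is exactly the intended one: the paper states this theorem without proof precisely because it follows immediately from the example showing $(\FF_p(X),\del)$ is antiderivable, the proposition that elementary extensions of antiderivable fields are antiderivable, Theorem~\ref{integ}, and the transitivity of elementary towers. Nothing is missing.
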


\section{Linear differential equations}

\begin{theorem}
Let $(K,\del)$ be an antiderivable differential field and $Q\in C(K)[X]$.
The linear differential equation $Q(\del)(y)=0$ admits a solution in
some elementary extension of $K$.
\end{theorem}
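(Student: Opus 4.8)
The plan is to induct on $\deg Q$, in each case reducing to a first-order equation solvable by a logarithmic extension followed by an exponential one. First I factor $Q$ into irreducibles over $C(K)$, which is legitimate since $C(K)[X]$ is a principal ideal domain. Because all the operators $R(\del)$ with $R\in C(K)[X]$ commute, any solution of $Q_1(\del)(y)=0$ for an irreducible factor $Q_1$ of $Q$ is automatically a solution of $Q(\del)(y)=0$, so I may assume $Q$ is irreducible. If $\deg Q=1$, say $Q=X-\beta$ with $\beta\in C(K)$, I solve $\del(y)=\beta y$ directly: by Theorem~\ref{integ} the element $\beta$ has an antiderivative $w$ (with $\del(w)=\beta$) in a logarithmic extension, and then the exponential extension of type ``$\exp(w)$'' contains $y$ with $\del(y)=\del(w)y=\beta y$. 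Both steps are elementary, so the base case holds.

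For the inductive step with $Q$ irreducible of degree $>1$, I split on separability. If $Q$ is separable I adjoin one root $\beta$ to the constants: the extension $K(\beta)$ with $\del(\beta)=0$ is algebraic separable, hence elementary, and by Lemma~\ref{linfield} it is again linear, so it stays antiderivable; over $K(\beta)$ the factor $X-\beta$ splits off and I finish as in the base case. If $Q$ is inseparable I write $Q(X)=S(X^{p^e})$ with $S$ separable and $e\geq1$ maximal, and set $D=\del^{p^e}$, which by Proposition~\ref{derpk} is a derivation commuting with $\del$. Adjoining a root $\beta$ of the separable polynomial $S$ to the constants (again elementary and antiderivability-preserving) reduces the problem to producing $y$ with $\del^{p^e}(y)=\beta y$.

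To solve $\del^{p^e}(y)=\beta y$ I look for $y$ of exponential type, $y=\exp(u)$ with $\del(y)=\del(u)y$. Carlitz's formula, as used in Lemma~\ref{linexp}, gives $\del^{p^e}(y)=\Psi_e(u)\,y$ with $\Psi_e(u)=\sum_{i=0}^e (\del^{p^i}(u))^{p^{e-i}}$, and since each map $v\mapsto(\del^{p^i}(v))^{p^{e-i}}$ is additive, $\Psi_e$ is an additive operator. The task thus becomes solving the additive equation $\Psi_e(u)=\beta$ in an elementary extension, after which $y=\exp(u)$ is the desired solution. A natural first attempt takes $u$ with $\del^{p^e}(u)=\beta$, obtained in a logarithmic extension by applying Theorem~\ref{integ} a total of $p^e$ times; this only yields $\Psi_e(u)=\beta+c$ with $c=\sum_{i=0}^{e-1}(\del^{p^i}(u))^{p^{e-i}}\in C(K)$. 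The remaining work is to correct this discrepancy, that is to solve $\Psi_e(v)=-c$ for constant $c$ and replace $u$ by $u-v$; using the relation $\Psi_e(v)=\Psi_{e-1}(v)^p+\del^{p^e}(v)$ one peels off a single power of $p$ and descends on $e$, each stage again handled by iterated integration via Theorem~\ref{integ} and Proposition~\ref{loglog}.

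I expect the main obstacle to be exactly this inseparable case. A root of $Q$ is then a $p^e$-th root $\beta^{1/p^e}$ which is purely inseparable over $C(K)$ and therefore cannot be adjoined by any elementary extension, so the solution cannot be produced by splitting $Q$ into linear factors. The content of the argument is that one may instead realise the solution transcendentally, as the exponential of a solution $u$ of $\Psi_e(u)=\beta$; equivalently, that the first-order equation $D(y)=\beta y$ for the derivation $D=\del^{p^e}$, though naturally solved by a $D$-logarithmic and a $D$-exponential extension, can in fact be realised by extensions that are elementary for the original derivation $\del$. Establishing the solvability of $\Psi_e(u)=\beta$ in a logarithmic extension, by controlling the constant discrepancies arising at each descent step through Proposition~\ref{loglog} and Theorem~\ref{integ}, is the crux; everything else, namely the factorisation, the separable case, and the reduction of order implicit in passing to an irreducible factor, is routine.
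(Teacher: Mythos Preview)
Your reduction to an irreducible factor, the separable case, and the reformulation of the inseparable case as ``solve $\Psi_e(u)=\beta$ for a constant $\beta$ and then take $y=\exp(u)$'' are all correct and are exactly the shape of the paper's argument. The genuine gap is in your correction step. After choosing $u$ with $\del^{p^e}(u)=\beta$ you get $\Psi_e(u)=\beta+c$ with $c=\Psi_{e-1}(u)^p$, and you propose to ``peel off a power of $p$ and descend on $e$''. But the relation $\Psi_e(v)=\Psi_{e-1}(v)^p+\del^{p^e}(v)$ does not give such a descent: to force the $\del^{p^e}$-term to vanish you would need $\Psi_{e-1}(v)^p=c$, i.e.\ $\Psi_{e-1}(v)=\Psi_{e-1}(u)$, and the right-hand side is \emph{not} a constant. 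Conversely, if you first integrate to make $\del^{p^e}(v)=c$, the new discrepancy is again of the form $\Psi_{e-1}(v)^p$ and you are back to an equation $\Psi_e(\cdot)=\text{const}$ of the same level $e$. No induction on $e$ with constant right-hand side has been set up, and iterated integration alone (Theorem~\ref{integ}, Proposition~\ref{loglog}) does not close this loop.

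The paper resolves exactly this point by a parametric trick that replaces your descent. One first produces, by iterated use of Proposition~\ref{loglogp}, an element $u_0$ in a logarithmic tower with $\del^{p^e}(u_0)=1$, and then looks for the solution in the one-parameter family $u=\alpha\,u_0$ with $\alpha$ an undetermined constant. Carlitz's formula gives $\Psi_e(\alpha u_0)=A(\alpha)$ where
\[
A(X)=\sum_{i=0}^{e} X^{p^{\,e-i}}\bigl(\del^{p^i}u_0\bigr)^{p^{\,e-i}}\in C(L_1)[X],
\]
and since the $i=e$ term is $X\cdot\del^{p^e}(u_0)=X$ one has $A'(X)=1$. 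Hence $A(X)-\beta$ (or, without first adjoining $\beta$, the composite $R(X)=P(A(X))$ with $Q=P(X^{p^e})$) has nonzero derivative and therefore possesses a root $\alpha$ in a \emph{separable} algebraic extension of the constants. Adjoining that $\alpha$ and then forming the exponential $E$ with $\del(E)=\alpha\,\del(u_0)\,E$ yields $Q(\del)(E)=P(A(\alpha))\,E=0$. In short, the missing idea is not more integration but one separable Artin--Schreier--type extension of the constants solving $A(\alpha)=\beta$; once you insert this in place of the descent, your outline becomes a complete proof equivalent to the paper's.
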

\begin{proof}
The proof extends Euler's method of looking for a solution of the form
$\exp(\alpha*x)$ for equations with inseparable characteristic polynomial.

Assuming $Q\neq 0$, we write $Q=P(X^{p^e})$ with $e$ as large as
possible, so that $P'\neq 0$.
Since $K$ is antiderivable, there exists $u$ in some elementary extension $L_1$
of $K$ such that $\del_{L_1}^{p^e}(u)=1$. We consider a separable extension
$L_2=L_1(\alpha)$ of $L_1$ such that $\del(\alpha)=0$, and the exponential
extension $L_3/L_2$ generated by $E$ such that
$\del_{L_3}(E)=\alpha\del_{L_2}(u)*E$.
By again applying Carlitz's formula we obtain
\begin{equation}
\del^{p^e}(E) = \left(\sum_{i=0}^e {\alpha^{p^{e-i}}*{\del^{p^i}(u)}^{p^{e-i}}}\right)*E\enspace.
\end{equation}
The polynomial $A(X)=\sum_{i=0}^e {X^{p^{e-i}}*{\del^{p^i}(u)}^{p^{e-i}}}$
is equal to $\sum_{i=0}^{e-1} {X^{p^{k-i}}*{\del^{p^i}(u)}^{p^{e-i}}} + X $
so belongs to $C(L_1)[X]$. Furthermore, $A'=1$ and $\del^{p^e}(E)=A(\alpha)*E$.
Thus $Q(\del)(E)=P(A(\alpha))*E$.
Setting $R=P(A(X))$ we have that $R'=P'(A(X))$ is non-zero since $P'$ is
non-zero. Thus $R$ admits a root $\alpha$ in some separable extension $L_2$ of
$L_1$, so it follows that $Q(\del)(E)=0$.
Note that the degree of $R$ is equal to the degree of $Q$.
\end{proof}

To conclude we give a partial proof of the result below:

\begin{theorem}
Let $(K,\del)$ be an antiderivable differential field and $P\in K[X]$.
The linear differential equation $P(\del)(y)=0$ admits a solution in
some elementary extension of $K$.
\end{theorem}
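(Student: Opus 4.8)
The plan is to reduce the variable-coefficient equation to the constant-coefficient case already settled, exploiting the linearity of $K$. Write $P=\sum_{i=0}^n a_i X^i$ with $a_i\in K$ and $a_n\neq 0$; we may assume $n\geq 1$, as otherwise only $y=0$ solves the equation. Because $K$ is antiderivable it is linear, so each coefficient $a_i$ satisfies a linear differential equation with coefficients in $C(K)$. By Lemma~\ref{peqn} there is a non-zero $p$-polynomial $S\in C(K)[X]$ with $S(\del)(a_i)=0$ for all $0\leq i\leq n$. Set $D=S(\del)$; by Proposition~\ref{derpk} this is a derivation of $K$, and of every differential extension of $K$, which commutes with $\del$ and annihilates every $a_i$.

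The role of $D$ is that it commutes with the whole operator $P(\del)$: for $y$ in any differential extension $L$ of $K$, since $D$ and $\del$ commute and $D(a_i)=0$, one computes $D(P(\del)(y))=P(\del)(D(y))$. Consequently $D$ maps the solution space $W=\{\,y\in L : P(\del)(y)=0\,\}$ into itself. Furthermore $D$ kills every constant of $L$ (a constant is annihilated by $\del$, hence by each $\del^{p^j}$, hence by $S(\del)$), so its restriction to $W$ is $C(L)$-linear. This is precisely the phenomenon displayed by the Airy example above, where $D=\del^3$ acts on the span of the solutions through a matrix whose minimal polynomial $T^2-(X^3+1)$ already has coefficients in the constant field.

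Since the theorem asks only for one solution, it suffices to produce a single non-zero $y$. First I would dispose of the first-order case $n=1$, where the equation reads $\del(y)=-(a_0/a_1)\,y$: by Theorem~\ref{integ} the element $-a_0/a_1$ has an antiderivative $u$ in a logarithmic extension, and the exponential extension generated by $E$ with $\del(E)=\del(u)\,E$ provides a solution. For $n>1$ I would work in a large enough extension $L$ where $W$ has full dimension $n$ over its constants, and triangularise the $C(L)$-linear map $D|_W$: an eigenvalue $\lambda$, algebraic over $C(K)$, with eigenvector $y_0$ yields an element satisfying simultaneously $P(\del)(y_0)=0$ and the constant-coefficient equation $(S(X)-\lambda)(\del)(y_0)=0$. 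Adjoining $\lambda$ keeps $K$ antiderivable by Lemma~\ref{linfield}, and the theorem just established for operators with coefficients in $C(K)$ then solves $S(\del)(y)=\lambda y$ inside an elementary extension. (Should one want the full solution space rather than a single solution, substituting $y=y_0 z$ cancels the $z$-term because $P(\del)(y_0)=0$, so $\del(z)$ satisfies an order-$(n-1)$ equation over the still antiderivable extension, and induction together with a final antiderivative from Theorem~\ref{integ} finishes.)

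The hard part is the passage from the abstract eigenvector to an elementary realisation. The triangularisation of $D$ takes place a priori in a Picard--Vessiot extension, which need not be elementary; and although the constant-coefficient theorem does solve $S(\del)(y)=\lambda y$ in an elementary extension, its exponential solutions are not automatically the ones lying in $W$. The crux is therefore to fix $\lambda$ over $\overline{C(K)}$ and to exhibit a common solution of $P(\del)(y)=0$ and $S(\del)(y)=\lambda y$ explicitly within an elementary tower --- together with the extra bookkeeping when $D|_W$ is not semisimple, where one only obtains $(S(X)-\lambda)^m(\del)(y_0)=0$ for some $m\geq 1$ (still a constant-coefficient operator, so the previous theorem continues to apply to each generalised eigenspace). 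This realisation step is, I expect, exactly what prevents the argument from being complete.
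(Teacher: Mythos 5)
Your strategy is genuinely different from the paper's, and it is worth recording how. The paper first adjoins a \emph{generic} solution: it forms $K(Y_0,\ldots,Y_{n-1})$ with the companion derivation so that $P(\del)(Y_0)=0$, invokes Theorem~\ref{KtoCL} to produce $Q\in C(K)[X]$ with $Q(\del)(Y_0)=0$, and then tries to come back down, turning a solution $f$ of the constant-coefficient equation $Q(\del)(f)=0$ (supplied by the preceding theorem) into a solution of $P(\del)(y)=0$ by applying an operator $R$ obtained from Euclidean division in the Ore ring of differential operators. Your plan is dual to this: where the proof of Theorem~\ref{KtoCL} restricts the $p$-polynomial operator $S(\del)$ to the span of the derivatives of one solution and takes its minimal polynomial, you restrict the same operator to the full solution space $W$ and look for an eigenvector. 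Both roads stall at the same spot, and the paper is explicit that its own proof is only partial: one must exhibit, inside an \emph{elementary} tower, an element that actually solves $P(\del)(y)=0$ and not merely the associated constant-coefficient equation. You flag this honestly, so it counts no more against you than against the paper. Your first-order case is correct (it is the Euler-method theorem with $e=0$), and the observation that $D=S(\del)$ commutes with $P(\del)$ and acts $C(L)$-linearly on $W$ is correct and is the right starting point.

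There is, however, an additional gap specific to your version. Triangularising $D|_W$ requires a field $L$ in which $W$ has dimension $n$ over $C(L)$ --- a Picard--Vessiot-type object whose existence in characteristic $p$ you have not justified and which is in any case not elementary --- and the eigenvalues of $D|_W$ then live a priori only in an algebraic closure of $C(L)$, not of $C(K)$. In characteristic $p$ the constant field genuinely grows under transcendental extensions (the paper warns about exactly this in the Definitions section), so ``$\lambda$ algebraic over $C(K)$'' is an unproved claim; and even granting it, $\lambda$ could be inseparable over $C(K)$, in which case adjoining it is not an elementary step in the sense of the paper. Theorem~\ref{KtoCL} sidesteps all of this by a deliberate choice of ground field: the span of the derivatives of $y$ is taken over $F=\ker S(\del)\subseteq K$, which satisfies $C(F)=C(K)$, and the identity $Q^{\del}=0$ then forces the minimal polynomial of the relevant endomorphism into $C(K)[X]$. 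If you want your eigenvalue to be controlled by $C(K)$, you need to route the linear algebra through such an $F$-structure inside a formal solution field like $K(Y_0,\ldots,Y_{n-1})$, rather than through the constants of a splitting field.
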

\begin{proof}
Without loss of generality, we can assume $P$ to be monic, and we write
$$P = \sum_{i=0}^n p_i*X^i\enspace.$$
Let $L$ be the differential field $(K(Y_0,\ldots,Y_{n-1}),\del)$ where
$\del(Y_i)=Y_{i+1}$ if $i<n-1$ and $\del(Y_n)=-\sum_{i=0}^{n-1} p_i*Y_i$.
We see that $P(\del)(Y)=0$.

Since $Y$ satisfies a differential equation with coefficients in $K$,
by Theorem~\ref{KtoCL}, it satisfies a linear differential
equation $Q(\del)(Y)=0$ with $Q\in C(K)[X]$.

By using the Euclidean division in the ring of differential operators $K[X,\delta]$
it is possible to construct a differential polynomial $R\in K[X,\delta]$ such that
if $Q(\del)(f)=0$, then $P(\del)(R(f,\del))=0$.
By the previous theorem, the equation $Q(\del)(f)=0$  admits a solution in some
elementary extension of $K$. Assuming the solution is sufficiently generic, the
element $R(f,\del)$ will be a non zero-solution to $P(\del)(y)=0$.
\end{proof}

\noindent Bill Allombert\\
CNRS, Institut de Math\'ematiques de Bordeaux, UMR 5251,\\
Universit\'e de Bordeaux, 351 cours de la Lib\'eration, F-33405 Talence Cedex, France\\
email: bill.allombert@math.u-bordeaux.fr

\end{document}